\DeclarePairedDelimiter\floor{\lfloor}{\rfloor}
\newcommand{\ms}{\mathrm{MS}}
\newcommand{\x}{{\bm x}}
\newcommand{\R}{\mathbb{R}}
\newcommand{\N}{\mathbb{N}}
\newcommand{\betad}{{\beta_d}}
\def\AA{{\bm A}}
\def\vi{v}
\def\vii{{\bm v}}
\def\bX{{\bm X}}
\def\Kor{\mathcal{K}}
\newtheorem{theorem}{Theorem}[section]
\newtheorem{lemma}[theorem]{Lemma}
\newtheorem{proposition}[theorem]{Proposition}
\newtheorem{corollary}[theorem]{Corollary}
\newtheorem{assumption}[theorem]{Assumption}
\theoremstyle{remark}
\newtheorem{remark}[theorem]{Remark}
\numberwithin{equation}{section}
\title{Polynomial Approximation of Symmetric Functions}
\author[1]{Markus Bachmayr}
\affil[1]{Institut f\"ur Geometrie und Praktische Mathematik, RWTH Aachen University, Templergraben 55, 52056 Aachen, Germany}
\author[2]{Genevi\`eve Dusson\footnote{genevieve.dusson@math.cnrs.fr}}
\affil[2]{Laboratoire de Math\'ematiques de Besan\c{c}on,
  UMR CNRS 6623,
  Universit\'e Bourgogne Franche-Comt\'e,
  16~route de~Gray,
  25030~Besan\c{c}on,
  France}
\author[3]{Christoph Ortner}
\affil[3]{Department of Mathematics, University of British Columbia, 1984 Mathematics Road,
Vancouver, BC, V6T 1Z2, Canada}
\author[4]{Jack Thomas}
\affil[4]{Mathematics Institute, Zeeman Building, University of Warwick, CV4 7AL, UK}
\begin{document}
\maketitle

\abstract{
    We study the polynomial approximation of symmetric multivariate functions and of multi-set functions. Specifically, we consider $f(x_1, \dots, x_N)$, where $x_i \in \mathbb{R}^d$, and $f$ is invariant under permutations of its $N$ arguments. We demonstrate how these symmetries can be exploited to improve the cost versus error ratio in a polynomial approximation of the function $f$, and in particular study the dependence of that ratio on $d, N$ and the polynomial degree. These results are then used to construct approximations and prove approximation rates for functions defined on multi-sets where $N$ becomes a parameter of the input.
}

% \tableofcontents

\section{Introduction}

Many quantities of interest in sciences and engineering  exhibit symmetries. The approximation of such quantities can be made more efficient when these symmetries are correctly exploited.
A typical problem we have in mind is the approximation of particle models, in particular interatomic potentials, or Hamiltonians in quantum chemistry, materials science or bio-chemistry.
Similar symmetries are present in $n$-point correlation or cumulant functions of stochastic processes.
Our current work focuses on polynomial approximation of multivariate functions that are symmetric under arbitrary permutations of coordinates, as a foundation for more complex symmetries. Even though our focus in the present work is on {\em symmetric} functions, our results are also directly relevant for {\em anti-symmetric} wave functions~\cite{acewave2022}: first, our complexity estimates immediately apply to anti-symmetric parameterisations; and secondly, many successful nonlinear parameterisations of wave functions (e.g., Jastrow or backflow) are in terms of symmetric components.

The performance of an approximation can be measured in different ways. One can measure the number of degrees of freedom required to achieve a given accuracy, in a target norm.  A second important factor is the evaluation cost of the approximate function. Indeed, one may reduce the number of basis functions required to approximate a given function within a given tolerance while greatly increasing the evaluation cost. Therefore, a good compromise between these two aspects is particularly important.

Thus, the aim of this article is to show how the approximation of permutation invariant functions can be made efficient by combining two elements: First, a particular symmetrisation in the evaluation of the function leading to a linear evaluation cost of each basis function with respect to the number of variables; and second, profiting from the symmetries to speed the convergence of the approximation with respect to the number of basis functions and evaluation cost. 

For physical models, one should also incorporate isometry invariance into the analysis, however this would make the analysis significantly more complex while only marginally improving our results. Thus for the sake of simplicity, the present work will only consider permutation invariant functions (symmetric functions) and we refer to \cite{Yutsis1965-rr, Bachmayr2019-ec} to explain how invariance under $O(d)$ (or other groups) can in principle also be incorporated into our framework.

In general, multivariate approximation of functions by polynomials on product domains (see for example~\cite{Mason1980-li,GriebelOettershagen:16,Trefethen2017-rc}) is -- even for analytic approximands -- subject to the \emph{curse of dimensionality}: the number of parameters necessary to reach a given accuracy increases exponentially with the space dimension.
One possibility to avoid this effect when approximating smooth high-dimensional functions is to exploit anisotropy in the different dimensions~\cite{Cohen2015-ol,GriebelOettershagen:16}.
In the present setting, however, due to the symmetry all dimensions play an equivalent role. Approximations of symmetric functions have been studied in~\cite{Han2019-ae} in a context of nonlinear approximation, where the authors provide bounds on the number of parameters needed to approximate symmetric or anti-symmetric functions within a target accuracy. These results also exhibit the curse of dimensionality.
However, note that due to the two different sets of assumptions used in~\cite{Han2019-ae} and our work, it is not straightforward to directly compare the results.
Related to our own work is the study of deep set architectures \cite{Zaheer2017:deepsets, Qi2017} for approximating set-functions, where the symmetry of the approximation is enforced by summation in a latent space. Theoretical results on deep sets \cite{Zaheer2017:deepsets,Wagstaff2019} relate the maximum number of elements in the set input to the dimension of the latent space (which is analogous to the total degree of our approximation) that is required to represent the function exactly, but no error estimates are available so far. 

In the present work we develop a rigorous approximation theory for symmetric (and multi-set) functions that directly relates the number of parameters to the approximation error, {\em independently of the space dimension} (or the number of inputs). We also note that such efficient representations of symmetric functions can heavily rely on invariant theory~\cite{Derksen2015-km} and group theory~\cite{sagan2001symmetric}. 
We will start in Section~\ref{sec:1d} by considering functions with full permutation symmetry, that is, invariance with respect to the permutation of any two variables. In Section~\ref{sec:results}, we will provide generic results for functions exhibiting symmetry with respect to permutations of vectorial arguments $(\R^d)^N$, which is the most typical situation for physical models. Here, $d$ is the dimension of each argument $x_i$, which could for example represent the position or momentum of a particle, while $N$ denotes the number of such arguments. Our analysis is particularly concerned with the question of how the two dimensions $d, N$ and the polynomial degree $D$ are connected in terms of approximation error and computational cost. 
In this regard, our point of view differs from the one of independent accuracy and dimensionality parameters, as taken for instance in \cite{Weimar12,Han2019-ae}.

Our primary motivation for this study is as a foundation for the approximation of extremely high-dimensional functions and of multi-set functions which can be decomposed in terms of a body-order expansion. The multi-set function setting is particularly interesting for us since it commonly arises in the representation of particle interactions. We will show in Section~\ref{sec:mset} how to extend our symmetric function approximation results to obtain approximation rates for functions defined on multi-sets. 
In that setting we will have to address the simultaneous approximation of a family of related functions with increasing dimensionality. Since our framework has close connections with deepsets, our analysis may also shed new light on those architectures. We briefly comment on the connection in Remark~\ref{rem:deepsets}, but do not include a deeper exploration in the present work.

\section{Symmetric Functions in $\R^N$}
\label{sec:1d}
Before we formulate our most general results we consider the approximation of a smooth symmetric function $f : [-1, 1]^N \to \R$. By $f$ being {\em symmetric} we mean that 
\begin{equation} \label{eq:sym1}
  f(x_{\sigma 1}, \dots, x_{\sigma N}) = f(x_1, \dots, x_N) \qquad \forall 
  \x \in [-1,1]^N, \quad \sigma \in {\rm Sym}(N),
\end{equation}
with ${\rm Sym}(N)$ denoting the symmetric group of degree $N$. For later reference we define 
\[
  C_{\rm sym}([-1,1]^N) := \big\{ f \in C([-1,1]^N) \colon 
              f \text{ is symmetric} \big\}.
\]

In this section we will outline the main ideas how symmetry can be optimally incorporated into approximation schemes in the simplest possible concrete setting, but will then generalize them in various ways in \S~\ref{sec:results}.

A general $f \in C([-1,1]^N)$ can be expanded as a Chebyshev series, 
\[
  f(\x) = \sum_{\vii \in \N^N } \hat{f}_\vii T_\vii(\x),
\]
where $T_\vii = \otimes_{n = 1}^N T_{\vi_n}$ and $T_\vi$ are the standard Chebyshev polynomials of the first kind. To allow for the possibility of constructing sparse approximations we will assume that $f$ belongs to a Korobov class, 
\begin{equation}  \label{eq:cheb_coeffs_analyticity}
  \Kor(M, \mu, \rho) := 
  \big\{
      f \text{~s.t.~} {\textstyle \sum_{\vii \in \N^N}} \rho^{\|\vii\|_1} |\hat{f}_\vii| 
         \leq M \mu^N
  \big\},
\end{equation}
which one can justify, e.g., through a suitable multi-variate generalisation of analyticity. The dependence of the upper bound on $N$ also arises naturally in this context. 
 
In high-dimensional approximation one often considers spaces with weighted norms that reflect the relative importance of dimensions, which would lead to replacing $\|\vii\|_1$ by accordingly weighted quantities; see, e.g., Chapter 5 of \cite{NovakWozniakowski08} and the references given there. However, due to our assumption of symmetry, all dimensions are equally important. 
In this case, the following total degree approximation is natural: For $D > 0$, define 
\begin{equation} \label{eq:cheb_total_deg_approx_nosym}
  f_D(\x) := \sum_{\vii : \|\vii\|_1 \leq D} \hat{f}_\vii T_\vii(\x), 
\end{equation}
then we immediately obtain the exponential approximation error estimate 
\begin{equation} \label{eq:cheb_approx_nosym}
  \| f - f_D \|_{L^\infty} \leq M \mu^N \rho^{-D}.
\end{equation}

Although the term $\rho^{-D}$ suggests an excellent approximation rate, the curse of dimensionality still enters through the prefactor $\mu^N$ as well as through the cost of evaluating $f_D$, which scales as 
\[
  \binom{N+D}{D} \sim
  \begin{cases}
      D^N / N!, & \text{as } D \to \infty, \\ 
      N^D / D!, & \text{as } N \to \infty.
  \end{cases}
\]
where $\binom{N+D}{D}$ is the number of terms in \eqref{eq:cheb_approx_nosym}. The number of operations involved in each term can be reduced to $\mathcal{O}(1)$; cf Remark.~\ref{rmk:cost}. 
Although this is far superior to the naive tensor product approximation leading to $\mathcal{O}(D^N)$ terms, it remains expensive in high dimensions.

Incorporating the symmetry into the parameterisation allows us to significantly reduce the number of basis functions (or, parameters): If we require that $f_D$ inherits the symmetry~\eqref{eq:sym1} -- see~\cite[Section 3]{Bachmayr2019-ec} why this does not worsen the approximation quality -- one can readily see that it is reflected in the coefficients via 
\[
  \hat{f}_{\vii} = \hat{f}_{\sigma(\vii)} \qquad \forall \sigma \in {\rm Sym}(N).
\]
Thus, we can obtain a symmetrised representation 
\begin{align} \label{eq:f_D_sym_naive}
  f_D(\x) &= 
  \sum_{\vii \in \N^N_{\rm ord}: \| \vii \|_1 \leq D}
      c_{\vii}  \;
      \operatorname{sym} T_{\vii}(\x) \\ 
  \text{where} \qquad & 
  \operatorname{sym} T_{\vii}(\x) = 
  \sum_{\sigma \in {\rm Sym}(N)} T_{\vii}(\sigma\x)
  =    
  \sum_{\sigma \in {\rm Sym}(N)} T_{\sigma \vii}(\x)
\end{align}
and $\N^N_{\rm ord}$ denotes the set of all {\em ordered} $N$-tuples, i.e.,
$\vii \in \N^N_{\rm ord}$ if $\vi_1 \leq \vi_2 \leq \dots \leq \vi_N$. When
$\vii$ is not strictly ordered then $\sigma\vii = \vii$ for some permutations
and hence the coefficient $c_{\vii}$ is different from $\hat{f}_{\vii}$.

It is immediate to see that $\operatorname{sym} T_\vii$ form a basis of the space of symmetric polynomials, which in turn is dense in $C_{\rm sym}$; see~\cite[Proposition 1]{Bachmayr2019-ec} for more details.

Although the representation \eqref{eq:f_D_sym_naive} significantly reduces the number of parameters $c_\vii$ (almost by a factor $N!$), it {\em does not} reduce the cost of evaluating $f_D$ due to the $N!$ cost of evaluating each symmetrised basis function $T^{\rm sym}_{\vii}$. However, an elementary idea from invariant theory leads to an alternative symmetric basis, and hence an alternative scheme to evaluate $f_D$, which significantly lowers the cost and appears to entirely overcome the curse of dimensionality: It is a classical result that, since $f_D$ is a symmetric polynomial, it can be written in the form
\begin{equation} \label{eq:approx_powersum_polys}
  f_D(\x) = q_D(p_1, \dots, p_N), 
\end{equation}
where $p_n(\x) := \sum_{j = 1}^N x_j^n$ are the power sum polynomials. This representation fully exploits the symmetry and one could expand on this idea to construct an efficient evaluation scheme. 

Here, we follow a closely related construction, inspired by~\cite{Drautz2019-rb}, which is easier to analyze and most importantly to generalize to more complex scenarios (cf. \S~\ref{sec:results}). A straightforward generalisation of the power sum polynomials is the following symmetric one-body basis, 
\begin{equation} \label{eq:1d:onebodybasis}
  A_\vi(\x) := \sum_{n = 1}^N T_\vi(x_n), \qquad \vi \in \N. 
\end{equation}
\begin{remark}
    Since $\big( T_v(x_n) \big)_{n=1}^N$ represents a feature vector, (\ref{eq:1d:onebodybasis}) is a pooling operation analogous to that introduced in \cite{Zaheer2017:deepsets} to impose symmetry in deep set architectures. We will say more about this connection in Remark~\ref{rem:deepsets}.
\end{remark}
Indeed, $A_1, \dots, A_N$ could play the same role as $p_1, \dots, p_N$ in \eqref{eq:approx_powersum_polys}, however, we use them differently by forming the products 
\begin{equation}
  \label{eq:1d:Abasis}
  \AA_\vii(\x) := \prod_{t = 1}^N A_{\vi_t}(\x), \qquad \vii \in \N^N.
\end{equation}
If $\sigma(\vii)$ is a permutation of $\vii$ then the two products $\AA_{\vii}$ and $\AA_{\sigma(\vii)}$ coincide; that is, $\AA_\vii$ are again symmetric polynomials. In fact they form a complete basis of that space. 

\begin{lemma} \label{th:AA_is_a_basis}
  The set $\AA := \{ \AA_\vii \colon \, \vii \in \N^N_{\rm ord} \}$ 
  is a complete basis of the space of symmetric polynomials.
\end{lemma}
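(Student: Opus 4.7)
The plan is to (i) observe a cardinality match between the index set $\N^N_{\rm ord}$ and a natural basis of the space of symmetric polynomials in $N$ variables, and (ii) establish linear independence of the $\AA_\vii$ via a leading-term analysis in the power-sum basis. Symmetry of each $\AA_\vii = \prod_t A_{\vi_t}$ is immediate, since each $A_{\vi_t}$ is manifestly symmetric.

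For the cardinality count, $\vii \in \N^N_{\rm ord}$ with $\|\vii\|_1 \leq D$ is in bijection with partitions $\lambda$ satisfying $\ell(\lambda) \leq N$ and $|\lambda| \leq D$ (take $\lambda$ to be the non-zero entries of $\vii$ in decreasing order). Since the ring of symmetric polynomials in $N$ variables is the polynomial ring $\R[p_1, \ldots, p_N]$ on the power sums $p_k(\x) = \sum_n x_n^k$ (with $\deg p_k = k$), its truncation to total degree $\leq D$ has dimension equal to the number of partitions with parts $\leq N$ and weight $\leq D$; by conjugation of Young diagrams this matches the first count. Hence spanning and linear independence of $\AA$ become equivalent.

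For linear independence, write $T_\vi(x) = 2^{\vi-1} x^\vi + (\text{lower degree})$ for $\vi \geq 1$ and $T_0 = 1$, so that $A_\vi = 2^{\vi-1} p_\vi + \pi_\vi$ with $\pi_\vi$ of total degree $< \vi$, and $A_0 = N$. Multiplying these out, the top-degree homogeneous component of $\AA_\vii$ equals $c(\vii)\, p_{\lambda(\vii)}$, where $c(\vii) \neq 0$ is an explicit scalar and $\lambda(\vii)$ is the partition formed by the non-zero entries of $\vii$. Distinct $\vii \in \N^N_{\rm ord}$ give distinct partitions $\lambda(\vii)$, each with $\ell(\lambda(\vii)) \leq N$, so a standard top-part (filtered-ring) argument reduces linear independence of $\AA$ to linear independence of $\{p_\lambda : \ell(\lambda) \leq N\}$. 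The latter follows from the classical expansion $p_\lambda = \sum_{\mu \unrhd \lambda} R_{\lambda\mu}\, m_\mu$ in monomial symmetric polynomials, which is triangular in the dominance order with non-zero diagonal; the implication $\mu \unrhd \lambda \Rightarrow \ell(\mu) \leq \ell(\lambda)$ ensures the triangular structure is preserved upon restricting to partitions with $\leq N$ parts. I expect this last ingredient — the basis property of $\{p_\lambda : \ell(\lambda) \leq N\}$ in the $N$-variable setting — to be the main non-routine step; once it is in hand, the result follows.
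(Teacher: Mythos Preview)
Your argument is correct, but it follows a genuinely different route from the paper's. The paper proves the result by exhibiting a direct triangular change of basis between the naive symmetrised tensor products $\operatorname{sym} T_{\vii}$ and the products $\AA_{\vii}$: expanding the product $\AA_{\vii} = \prod_t \sum_j T_{v_t}(x_j)$ into a sum over all index tuples $(j_1,\dots,j_N)$ and separating the distinct-index terms from the repeated-index (``self-interaction'') terms, one finds $\operatorname{sym} T_{\vii} = c\,\AA_{\vii} + B'$ where $B'$ lies in the span of $\operatorname{sym} T_{\vii'}$ with strictly fewer nonzero entries. This gives a triangular transformation (ordered by the number of nonzero indices, i.e.\ correlation order) with nonzero diagonal, hence an invertible change of basis between two families already known to span the symmetric polynomials.

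You instead filter by total degree: the leading homogeneous part of $\AA_{\vii}$ is a nonzero scalar multiple of the power-sum product $p_{\lambda(\vii)}$, so linear independence of the $\AA_{\vii}$ reduces to that of $\{p_\lambda : \ell(\lambda)\le N\}$ in $N$ variables, which you obtain from the classical dominance-order triangularity $p_\lambda = \sum_{\mu \unrhd \lambda} R_{\lambda\mu} m_\mu$ together with the observation that $\mu \unrhd \lambda \Rightarrow \ell(\mu)\le\ell(\lambda)$. Spanning then follows from the cardinality match. The paper's argument is more self-contained (no external symmetric-function machinery) and, importantly, the triangularity by correlation order it uncovers is reused later in the general $(\R^d)^N$ setting where a power-sum leading-term analysis is less natural. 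Your approach, on the other hand, places the result cleanly within standard symmetric-function theory and makes the role of the power-sum generators explicit.
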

\begin{proof}
  We begin by rewriting the naive symmetrised basis as 
  \begin{align*}
      \operatorname{sym} T_{\vii}(\x) = \sum_{\sigma \in {\rm Sym}(N)} \prod_{t = 1}^N T_{\vi_t}(x_{\sigma t})
      &= 
      \frac{1}{N!} \sum_{j_1 \neq \cdots \neq j_N} \prod_{t = 1}^N T_{\vi_t}(x_{j_t}) \\ 
      &= 
      \frac{1}{N!} \sum_{j_1, \dots, j_N}  
      \prod_{t = 1}^N T_{\vi_t}(x_{j_t}) 
      + B' \\ 
      &= 
      \frac{1}{N!} \AA_{\vii}(\x) + B',
  \end{align*}
  where $B'$ contains the ``self-interactions'', i.e., repeated $j_t$ indices. Therefore, 
  \[
      B'  \in \text{span} \bigg\{ \sum_{\sigma \in {\rm Sym}(N)}  \prod_{j=1}^N T_{v_j}(x_{\sigma j})\,\colon \;
      \text{at least one $v_j$ is 0}
      \bigg\}.
  \]
  Since $T_0 = 1,$ $B'$ only involves terms with strictly less than $N$ products. Therefore, the change of basis from $T^{\rm sym}_\vii$ to $\AA_\vii$ is lower triangular with $1/N!$ terms on the diagonal, and is hence invertible.
\end{proof}

A second immediate observation is that the total degree $\|\vii\|_1$ of a tensor product basis function $\otimes_{t = 1}^N T_{\vi_t}$ immediately translates to the $\AA$ basis. That is, the total degree of $\AA_\vii$ is again $\|\vii\|_1$, which yields the next result. 

\begin{corollary}
  There exist coefficients $\tilde{c}_\vii$ such that 
  \begin{equation} \label{eq:cheb_totaldeg_withsym}
      f_D(\x) = 
      \sum_{\vii \in \N^N_{\rm ord}: \| \vii \|_1 \leq D } \tilde{c}_{\vii} \AA_{\vii}(\x).
  \end{equation}
\end{corollary}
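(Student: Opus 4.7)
The plan is to observe that $f_D$ is itself a symmetric polynomial of total degree at most $D$, and then argue that the degree-restricted set $\{\AA_\vii : \vii \in \N^N_{\rm ord},\; \|\vii\|_1 \leq D\}$ is a basis of $V_D^{\rm sym}$, the space of all such polynomials. The conclusion is then immediate from the basis expansion.

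For the first assertion, symmetry of $f_D$ follows because the Chebyshev coefficients inherit permutation invariance, $\hat{f}_{\sigma\vii} = \hat{f}_\vii$, and the truncation set $\{\|\vii\|_1 \leq D\}$ is stable under permutations of the entries of $\vii$; the total-degree bound is immediate since each $T_\vii$ has total polynomial degree $\|\vii\|_1$. Thus $f_D \in V_D^{\rm sym}$.

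For the basis claim, $\{\AA_\vii : \|\vii\|_1 \leq D\} \subset V_D^{\rm sym}$ since $\AA_\vii = \prod_t A_{v_t}$ has total degree exactly $\|\vii\|_1$, and linear independence is given by Lemma~\ref{th:AA_is_a_basis}. What remains is to show this set actually spans $V_D^{\rm sym}$. The plan is to iterate the triangular identity from the proof of Lemma~\ref{th:AA_is_a_basis}: the correction $B'$ in $\operatorname{sym} T_\vii = \frac{1}{N!}\AA_\vii + B'$ can itself be rewritten, using the Chebyshev product formula $T_a T_b = \tfrac{1}{2}(T_{a+b} + T_{|a-b|})$ applied to each coinciding-index pair, as a linear combination of $\operatorname{sym} T_{\vii'}$ with $\|\vii'\|_1 \leq \|\vii\|_1$ and strictly more zero components than $\vii$. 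Induction on the number of nonzero components (with base case $\vii = (0,\dots,0,k)$, where $\operatorname{sym} T_\vii$ is an explicit scalar multiple of $\AA_\vii$) then expresses every $\operatorname{sym} T_\vii$ with $\|\vii\|_1 \leq D$ as a combination of $\AA_{\vii'}$ with $\|\vii'\|_1 \leq D$. Since the $\operatorname{sym} T_\vii$ span $V_D^{\rm sym}$, this gives $V_D^{\rm sym} \subseteq \mathrm{span}\{\AA_\vii : \|\vii\|_1 \leq D\}$ and hence equality.

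The main obstacle is the careful bookkeeping showing that the correction $B'$ does not raise $\|\cdot\|_1$ above $\|\vii\|_1$; this is not entirely explicit in the proof of Lemma~\ref{th:AA_is_a_basis} but follows directly by tracking degrees through the Chebyshev product identity. A shortcut that avoids this induction is a dimension count: $\dim V_D^{\rm sym}$ equals the number of partitions of integers $\leq D$ into at most $N$ parts, which matches $|\{\vii \in \N^N_{\rm ord}: \|\vii\|_1 \leq D\}|$, so any linearly independent subset of $V_D^{\rm sym}$ of matching cardinality is automatically a basis, and Lemma~\ref{th:AA_is_a_basis} supplies linear independence for free.
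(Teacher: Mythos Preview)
Your proposal is correct and matches the paper's (very terse) argument: the paper simply notes that $\deg \AA_\vii = \|\vii\|_1$ and declares the corollary, implicitly relying on exactly the dimension count you spell out at the end. Your careful treatment of why $B'$ stays within total degree $\|\vii\|_1$ via the Chebyshev product formula is a legitimate way to make the triangular change-of-basis explicit, but the dimension-count shortcut is both cleaner and closer to what the paper leaves unsaid.
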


In Remark~\ref{rmk:cost} we explain that the computational cost of evaluating \eqref{eq:cheb_totaldeg_withsym} is directly proportional to the the number of terms, or parameters, which we denote by 
\[
  P(N, D) := 
  \# \big\{ \vii \in \N^N_{\rm ord}: \| \vii \|_1 \leq D \big\}.
\]
When clear from the context we will write $P = P(N,D)$. To estimate that number we observe that the set 
\[
  \big\{ \vii \in \N^N_{\rm ord}: \| \vii \|_1 = D \big\}
\]
can be interpreted as the set of all integer partitions of $D$, of length at most $N$ (indices $\vi = 0$ do not contribute). There exist various bounds for the number of such partitions that incorporate both $N$ and $D$, such as~\cite[Theorem 4.9.2]{Ramirez_Alfonsin2005-et}, originally presented in~\cite{Beged-dov1972-mp},
\begin{equation} \label{eq:params_regimes}
  P(N, D) \le \frac{\left( D+\frac{N(N+1)}{2}\right)^N}{(N!)^2}
  \sim \frac{D^{N}}{(N!)^2} 
  \qquad \text{as } D \rightarrow \infty,
\end{equation}
which (unsurprisingly) suggests that we gain an additional factor $N!$ in the number of parameters and in the computational cost, compared to the total degree approximation which has asymptotic cost $\binom{N+D}{D} \sim \frac{D^N}{N!}$ as $D \to \infty$. We will return to these estimates below.

Since we are particularly interested in an $N$-independent bound we will instead use a classical result of Hardy and Ramanujan~\cite{Hardy1918-gg}.

\begin{lemma} \label{th:hardyramanujan}
  For any $N, D$ we have 
  \begin{equation} \label{eq:hardyramanujan}
      P(N, D) 
      \leq 
      \frac{1}{8 \sqrt{3} D } \exp \left(\pi \sqrt{\textstyle{\frac{4}{3}} D}  \right).
  \end{equation}
\end{lemma}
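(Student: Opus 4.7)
The plan is to reduce $P(N,D)$ to the unrestricted integer-partition function $p$ and invoke the classical Hardy--Ramanujan asymptotic. A quick inspection shows that the claimed right-hand side is precisely the leading-order Hardy--Ramanujan estimate of $p(2D)$, since
\[
  \frac{1}{4n\sqrt{3}}\exp\!\bigl(\pi\sqrt{2n/3}\bigr)\bigg|_{n=2D}
  \;=\;\frac{1}{8\sqrt{3}\,D}\exp\!\bigl(\pi\sqrt{4D/3}\bigr).
\]
This suggests aiming at the chain $P(N,D)\le \sum_{k=0}^{D}p(k)\le p(2D)$ and then applying Hardy--Ramanujan at $n=2D$.

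The first inequality is immediate: every $\vii\in\N^N_{\rm ord}$ with $\|\vii\|_1=k\le D$ becomes, after deletion of the zero entries, a partition of $k\in\{0,\ldots,D\}$ of length at most $N$, and dropping the length restriction can only enlarge the count. For the second inequality I would construct the explicit injection
\[
  \Psi\colon\bigsqcup_{k=0}^{D}\{\lambda\vdash k\}\hookrightarrow\{\mu\vdash 2D\},\qquad
  \Psi(\lambda)\;=\;\lambda\cup\{2D-k\},
\]
obtained by appending one new part of size $2D-k$ to $\lambda\vdash k$. Since $2D-k\ge D$ while every part of $\lambda$ is at most $k\le D$, the appended part is always a (possibly tied) maximum of $\Psi(\lambda)$, so $\Psi$ does land in partitions of $2D$. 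Injectivity follows by removing a single copy of the largest part of $\Psi(\lambda)$; the only tie occurs at $k=D$ with $\lambda=(D)$, in which case $\Psi(\lambda)=(D,D)$ and removal of either $D$ still recovers $(D)$.

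It then remains to apply the Hardy--Ramanujan upper bound
\[
  p(n)\;\le\;\frac{1}{4n\sqrt{3}}\exp\!\bigl(\pi\sqrt{2n/3}\bigr)
\]
at $n=2D$, which gives exactly the stated inequality \eqref{eq:hardyramanujan}.

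The main obstacle is securing a \emph{pointwise} Hardy--Ramanujan bound with this precise constant $1/(4n\sqrt{3})$, since \cite{Hardy1918-gg} establishes the asymptotic rather than a universal inequality. I would either cite a standard refinement (e.g.\ derivable from Rademacher's exact convergent series, or from Maróti-type elementary estimates) or verify the inequality directly for small $n$ and glue it to the asymptotic for large $n$. A secondary bookkeeping point is the tie case in the injection, but as noted above it resolves without loss.
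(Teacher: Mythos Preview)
Your proposal is correct and follows essentially the same route as the paper: both bound $P(N,D)$ by the unrestricted partition count $p(2D)$ and then invoke Hardy--Ramanujan, the only difference being that the paper cites a reference for the inequality $\#\{\vii\in\N^N_{\rm ord}:\|\vii\|_1\le D\}\le p(2D)$ while you supply the explicit injection. Your flagged concern about securing a \emph{pointwise} (rather than merely asymptotic) Hardy--Ramanujan bound with the stated constant is legitimate, and it is a subtlety the paper's own proof also leaves unaddressed.
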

\begin{proof}
  The result of \cite[Theorem 6.8]{2011-ip} states that, the cardinality of the set $\big\{ \vii \in \N^N_{\rm ord}: \| \vii \|_1 \le D \big\}$ is bounded by the number of additive integer partitions of $2D$, i.e., 
  \[ 
      P(N,D) = 
      \#\big\{ \vii \in \N^N_{\rm ord}: \| \vii \|_1 \le D \big\}
      \leq 
      \#\big\{ \vii \in \N^N_{\rm ord}: \| \vii \|_1 = 2D \big\}.
  \]
  The result of Hardy and Ramanujan~\cite{Hardy1918-gg} estimates the latter cardinality as stated 
  in \eqref{eq:hardyramanujan}.
\end{proof}

The key property of this bound is that it is independent of the domain dimension $N$, which yields the following super-algebraic (but sub-exponential) convergence rate.

\begin{theorem} \label{th:mainresult_1d}
  Let $f \in C_{\rm sym}([-1,1]^N) \cap \Kor(M, \mu, \rho)$, then there exists a constant $c > 0$ such that for all $D \geq c N$, the symmetric total degree approximation \eqref{eq:cheb_totaldeg_withsym} satisfies
  \[
      \|f - f_D \|_{L^\infty} 
      \leq C \exp\Big( - \alpha [\log P]^2\Big),
  \]
  where $C, \alpha > 0$ are independent of $N$ and $D$ but may depend on 
  $M, \mu, \rho$.
\end{theorem}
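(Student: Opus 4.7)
The plan is to combine the exponential error estimate \eqref{eq:cheb_approx_nosym}, which carries an unfavorable prefactor $\mu^N$, with the $N$-independent Hardy--Ramanujan bound from Lemma~\ref{th:hardyramanujan}. The linear condition $D \ge cN$ is exactly what allows the $\mu^N$ term to be absorbed into half of the exponential decay in $D$, after which the claim becomes a direct substitution.

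First, I would invoke \eqref{eq:cheb_approx_nosym}, writing
\[
   \|f - f_D\|_{L^\infty} \le M \mu^N \rho^{-D}
   = M \exp\!\big( N \log \mu - D \log \rho \big).
\]
Since $f$ is nontrivial only when $\rho > 1$ (so $\log\rho > 0$), and the case $\mu \le 1$ is trivial (the prefactor is at most $M$), I would treat $\mu > 1$ as the main case and choose
\[
   c := \max\!\Big(1,\; \tfrac{2 \log \mu}{\log \rho}\Big).
\]
Then for every $D \ge cN$ we have $N \log \mu \le \tfrac12 D \log \rho$, which yields the clean bound
\[
   \|f - f_D\|_{L^\infty} \le M \exp\!\big( -\tfrac12 D \log \rho \big),
\]
with no remaining explicit dependence on $N$.

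Next, I would apply Lemma~\ref{th:hardyramanujan}: since $c \ge 1$ and $N \ge 1$, we have $D \ge 1$, so the logarithmic term $\log\!\big(1/(8\sqrt{3}D)\big)$ is nonpositive and can be discarded, leaving
\[
   \log P(N,D) \;\le\; \pi \sqrt{\tfrac{4D}{3}}.
\]
Inverting this gives $D \ge \tfrac{3}{4\pi^2}\,[\log P]^2$, which I would substitute into the bound from the previous step to obtain
\[
   \|f - f_D\|_{L^\infty} \le M \exp\!\Big( -\alpha\,[\log P]^2 \Big),
   \qquad \alpha = \frac{3 \log \rho}{8\pi^2},
\]
with $\alpha$ depending only on $\rho$, as claimed.

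There is no genuine obstacle here: the substantive work was already done in establishing Lemma~\ref{th:hardyramanujan}, which delivers the crucial dimension-free bound on the parameter count. The only subtlety is the interplay between the two exponential factors $\mu^N$ and $\rho^{-D}$, which forces the condition $D \ge cN$; without it, the $\mu^N$ prefactor would prevent any $N$-independent rate. One could also refine $\alpha$ by not discarding the negative $\log(1/(8\sqrt{3}D))$ contribution, but this does not improve the qualitative statement.
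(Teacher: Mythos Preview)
Your proof is correct and follows essentially the same route as the paper: absorb the $\mu^N$ prefactor into the exponential decay via the linear constraint $D \ge cN$ (you split off half of $\rho^{-D}$, the paper instead passes to an auxiliary $\bar\rho < \rho$), then invert the Hardy--Ramanujan bound $\log P \lesssim \sqrt{D}$. The only difference is cosmetic---your constants are more explicit, while the paper leaves them implicit.
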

\begin{proof}
  From our foregoing discussion we obtain that 
  \[
      \log \|f - f_D \|_{L^\infty} 
      \leq \log M + N \log \mu - D \log \rho. 
  \]
    Fix any $1 < \bar\rho < \rho$, then clearly, there exists $c>0$ such that $D \geq cN$ implies 
  \[
      \log M + N \log \mu - D \log \rho \leq \log M - D \log \bar\rho,
  \]
  hence, in this regime, we obtain 
  \begin{equation} \label{eq:prf1d:20}
      \|f - f_D \|_{L^\infty}  \leq
      M \bar\rho^{-D}.
  \end{equation}
  Next, we estimate $D$ in terms of the number of parameters, 
  using Lemma~\ref{th:hardyramanujan}, by 
      $\log P  
      \leq C_1 \sqrt{D}$,   
  which we invert to obtain 
  \[ 
      D \geq \bigg[\frac{\log P}{C_1} \bigg]^2.
  \]
  Inserting this into \eqref{eq:prf1d:20} completes the proof with 
  $C = M$ and $\alpha = \log(\bar\rho) / C_1^2$.
\end{proof}

\begin{remark}
  Although the proof of Theorem~\ref{th:mainresult_1d} appears to sacrifice a significant amount of information by using the Hardy-Ramanujan result instead of the sharper estimate \eqref{eq:params_regimes}, it turns out to be sharp in the regime $c_1 N \leq D \leq C_1 N^2$.
  %, and close to sharp in the regime $D \gg N^2$. 
  We will explain this observation in more detail in \S~\ref{sec:sharpness}.
\end{remark}

\section{General Results}
\label{sec:results}
\subsection{A basis for symmetric functions in $(\R^d)^N$}
We consider the approximation of functions $f(\x_1, \dots, \x_N)$ where each coordinate $\x_j \in \Omega \subset \R^d$, $d \in \mathbb{N}$, and $f$ is invariant under permutations, i.e., 
\begin{equation}
  \label{eq:perm_symm}
  f(\x_1, \dots, \x_N) = f(\x_{\sigma 1}, \dots, \x_{\sigma N}) 
  \qquad \forall \sigma \in {\rm Sym}(N).
\end{equation}
We will indicate this scenario by saying that $f : \Omega^N \to \R$ is symmetric. We assume throughout that $\Omega$ is the closure of a domain, and define the space 
\[
  C_{\rm sym}(\Omega^N) := \{ f \in C(\Omega^N)\,\colon\; f \text{ is symmetric}\}.
\]
To construct approximants we begin from a one-body basis, 
\[
  \Phi := \big\{ \phi_\vi \,\colon\; \vi \in \N \big\}.
\]

\begin{assumption} \label{as:PHI}
  To enable the convenient extension of uni-variate to multi-variate constructions we make the following assumptions which are easily justified for all basis sets that we have in mind; see~Remark~\ref{rem:examples}.
\begin{itemize}
  \item[{\rm($\Phi$1)}] The set of basis functions $\Phi$ is dense in $C(\Omega)$. 
  \item[{\rm($\Phi$2)}] Each $\phi_v \in \Phi$ has an associated degree $\operatorname{deg} \phi_v \in \mathbb{N}$.
  \item[{\rm($\Phi$3)}] $v = 0$ is an admissible index, $\phi_0 = 1$ and $\operatorname{deg} \phi_0  = 0$. 
\end{itemize}
  
  \noindent Finally, we need a definition of ``intrinsic dimensionality'' of our basis, and for simplicity also require that it matches the dimensionality of the domain $\Omega$. We therefore make the following additional assumption: 
\begin{itemize}
  \item[{\rm($\Phi$4)}] The number of one-body basis functions $\phi_v$ of degree $i$ is bounded by
  \[
      c_d(i) \le c (i+d-1) (i+d-2) \ldots (i+1), 
  \]
  where $c>0$ is a constant.
\end{itemize}
\end{assumption}

\noindent Under assumption ($\Phi$1), the tensor product basis functions 
\[
  \Phi^{\otimes N} := \big\{ \phi_{\vii} = \otimes_{t = 1}^N \phi_{v_t}
         \colon\, \vii \in \N^N \big\}
\] 
then satisfy that $\operatorname{span} \Phi^{\otimes N}$ is dense in $C(\Omega^N)$ by the characterization of this space as an $N$-fold injective tensor product (see, e.g., \cite[\S 3.2]{Ryan2002}). As an immediate consequence, we obtain that the symmetrised tensor products, 
\[
  \operatorname{sym}\big[\Phi^{\otimes N}\big]
  :=
  \bigg\{ 
      \operatorname{sym}\big[\phi_{\vii}\big] := \frac{1}{N!} \sum_{\sigma \in {\rm Sym}(N)}
         \phi_{\vii} \circ \sigma 
         \,\colon\; 
      \vii \in \N^N 
  \bigg\}
\]
span the symmetric functions, that is, we have the following result.

\begin{proposition}
  ${\rm span}\,\operatorname{sym}\big[\Phi^{\otimes N}\big]$ is dense in $C_{\rm sym}(\Omega^N)$, and
      \[
        \| f - \operatorname{sym}[f_D] \| \leq \| f - f_D \|.
      \]
\end{proposition}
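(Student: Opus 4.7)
The plan is to reduce both statements to a single contraction property of the symmetrisation operator, namely that $\operatorname{sym} \colon C(\Omega^N) \to C_{\rm sym}(\Omega^N)$ is a bounded linear projection of norm at most one.

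First I would verify this contraction property directly from the definition: since each $\sigma \in {\rm Sym}(N)$ acts as a bijection on $\Omega^N$, one has $\|g \circ \sigma\|_{L^\infty(\Omega^N)} = \|g\|_{L^\infty(\Omega^N)}$, and by the triangle inequality applied to the averaged sum defining $\operatorname{sym}[g]$, it follows that $\|\operatorname{sym}[g]\| \le \|g\|$. Moreover, if $g$ is already symmetric, then $g \circ \sigma = g$ for every $\sigma$, so $\operatorname{sym}[g] = g$; that is, $\operatorname{sym}$ acts as the identity on $C_{\rm sym}(\Omega^N)$.

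With this in hand, the error bound is immediate: for any approximant $f_D$, using linearity of $\operatorname{sym}$ and the fact that $f = \operatorname{sym}[f]$,
\[
  \|f - \operatorname{sym}[f_D]\| = \|\operatorname{sym}[f - f_D]\| \leq \|f - f_D\|.
\]
The density statement follows from the same observation combined with the already-quoted density of $\operatorname{span}\,\Phi^{\otimes N}$ in $C(\Omega^N)$ (via the injective tensor product characterisation under assumption ($\Phi$1)). Given $f \in C_{\rm sym}(\Omega^N)$ and $\varepsilon > 0$, pick $g \in \operatorname{span}\,\Phi^{\otimes N}$ with $\|f - g\| < \varepsilon$; then $\operatorname{sym}[g] \in \operatorname{span}\,\operatorname{sym}[\Phi^{\otimes N}]$ (as $\operatorname{sym}$ is linear and maps each $\phi_\vii$ to $\operatorname{sym}[\phi_\vii]$), and by the inequality above $\|f - \operatorname{sym}[g]\| < \varepsilon$.

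There is no real obstacle here; the only minor subtlety is checking that $\operatorname{sym}$ maps the linear span of tensor products into the span of symmetrised tensor products, which is obvious by linearity. The whole proof is essentially a two-line argument once the contraction property and idempotency of $\operatorname{sym}$ on symmetric functions are stated.
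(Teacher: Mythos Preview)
Your proposal is correct and is precisely the standard argument: the symmetrisation operator is a norm-one linear projection onto $C_{\rm sym}(\Omega^N)$, from which both the density and the error inequality follow immediately. The paper's own proof simply refers back to \S~\ref{sec:1d} and to \cite[Sec.~3]{Bachmayr2019-ec} rather than spelling out the details, so your write-up in fact supplies the argument the paper defers to those references; there is no difference in approach.
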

\begin{proof}
  The argument is a verbatim repetition of the proof of  the argument in \S~\ref{sec:1d}, Lemma~\ref{th:AA_is_a_basis}, and of \cite[Sec. 3]{Bachmayr2019-ec}.
\end{proof}

An alternative symmetric many-body basis can be constructed by mimicking the construction of the power sum polynomials in \S~\ref{sec:1d} (see also \cite{Drautz2019-rb,Bachmayr2019-ec} which are our inspiration for this construction), 
\begin{align}
  \AA_\vii(\bX) &= \prod_{t = 1}^N A_{\vi_t}, \qquad \text{where} \\ 
  A_\vi(\bX) &= \sum_{n = 1}^N \phi_\vi(\x_n).
\end{align}
We denote the corresponding basis set by 
\[
  \AA := \big\{ \AA_{\vii} \,\colon\; \vii \in \mathbb{N}^N_{\rm ord} \}, 
\]
where we recall that $\N^N_{\rm ord} := \{ \vii \in \N^N \,\colon\; \vi_1 \leq \vi_2 \leq \dots \}$. For future reference we define 
\[
    {\rm deg}(\vii) := \sum_{t = 1}^N \operatorname{deg} \phi_{v_t}, \qquad \text{for } 
    \vii \in \mathbb{N}^N.
\]

\begin{proposition}
  ${\rm span} \,\AA$ = ${\rm span}\,\operatorname{sym}\big[\Phi^{\otimes N}\big]$, and 
  in particular, ${\rm span}\,\AA$ is dense in $C_{\rm sym}(\Omega^N)$. 
  Moreover, $\AA$ is linearly independent.
\end{proposition}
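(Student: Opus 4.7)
The plan is to mirror the argument of Lemma~\ref{th:AA_is_a_basis} by exhibiting a triangular change of basis between $\{\AA_\vii\}$ and $\{\operatorname{sym}[\phi_\vii]\}$, indexed by $\vii \in \N^N_{\rm ord}$. Once the triangular relation is in place, invertibility yields both the equality ${\rm span}\,\AA = {\rm span}\,\operatorname{sym}[\Phi^{\otimes N}]$ and the linear independence of $\AA$ simultaneously; density in $C_{\rm sym}(\Omega^N)$ is inherited directly from the preceding proposition.

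First I would expand
\[
\AA_\vii(\bX) \;=\; \prod_{t=1}^N \sum_{n=1}^N \phi_{v_t}(\x_n) \;=\; \sum_{\jj \in \{1,\ldots,N\}^N} \prod_{t=1}^N \phi_{v_t}(\x_{j_t}),
\]
and split the outer sum according to whether $\jj$ is a bijection of $\{1,\ldots,N\}$. The $N!$ bijective tuples reassemble exactly into $N! \cdot \operatorname{sym}[\phi_\vii](\bX)$, so
\[
\AA_\vii \;=\; N!\,\operatorname{sym}[\phi_\vii] + R_\vii,
\]
where $R_\vii$ collects the ``self-interaction'' contributions from non-bijective $\jj$.

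The triangularity then reduces to showing that $R_\vii$ lies in the span of $\operatorname{sym}[\phi_{\vii'}]$ with $\vii'$ having strictly fewer nonzero entries than $\vii$. For a non-bijective $\jj$ there is a partition $\{S_n\}_{n=1}^N$ of the factor labels $\{1,\ldots,N\}$ with some $|S_n| \geq 2$ and at least one empty $S_m$; the corresponding summand equals $\prod_n \prod_{t \in S_n} \phi_{v_t}(\x_n)$. I would re-express this by (i) inserting $\phi_0(\x_m)=1$ in place of each missing coordinate, using assumption~$(\Phi 3)$, and (ii) rewriting each single-coordinate product $\prod_{t \in S_n} \phi_{v_t}$ as a linear combination of basis elements $\phi_w$. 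After re-symmetrisation each such contribution takes the form $\operatorname{sym}[\phi_{\vii'}]$ with $\vii'$ having at least one more zero entry than $\vii$. Induction on the number of nonzero entries then closes the triangular structure, and invertibility (with diagonal $N!$) yields both the equality of spans and, since $\operatorname{sym}[\Phi^{\otimes N}]$ is linearly independent (different ${\rm Sym}(N)$-orbits in $\N^N$ involve disjoint sets of tensor-product basis functions), the linear independence of $\AA$.

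The principal obstacle lies in step~(ii): pointwise products of basis functions must re-expand in ${\rm span}\,\Phi$. In the 1D Chebyshev setting this is automatic via $T_p T_q = \tfrac{1}{2}(T_{p+q} + T_{|p-q|})$, but in the general $(\R^d)^N$ setting one must appeal to the polynomial nature of $\Phi$ and a corresponding product-to-sum relation (as in the examples of Remark~\ref{rem:examples}), or equivalently regard ${\rm span}\,\Phi$ as a multiplicatively closed algebra. Careful tracking of the ``number of nonzero entries'' ordering is also required to guarantee that the reduction terminates after finitely many steps.
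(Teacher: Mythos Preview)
Your approach is essentially that of the paper, which merely states that the argument is identical to Lemma~\ref{th:AA_is_a_basis}: a triangular change of basis between $\{\AA_\vii\}$ and $\{\operatorname{sym}[\phi_\vii]\}$, graded by the number of nonzero entries. Your observation about step~(ii) is well taken and in fact sharper than the paper's one-line proof: the argument tacitly assumes that $\operatorname{span}\Phi$ is closed under pointwise multiplication, which is not listed among $(\Phi 1)$--$(\Phi 4)$ but does hold in all the polynomial examples of Remark~\ref{rem:examples}.

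One small correction is needed. The claim that each self-interaction term yields a $\vii'$ with \emph{strictly} more zero entries than $\vii$ fails when $\vii$ already contains zeros. For instance, with $N=2$ and $\vii=(0,1)$, the non-bijective tuple $\jj=(1,1)$ contributes $\phi_0(x_1)\phi_1(x_1)=\phi_1(x_1)$, which after inserting $\phi_0(x_2)$ reproduces $\operatorname{sym}[\phi_{(0,1)}]$ itself; the diagonal entry is then not $N!$. The easy fix is to observe that $A_0=N$ is a scalar, so $\AA_\vii$ with $k$ zero entries equals $N^k$ times the product over the nonzero entries only; run the triangular argument on ordered tuples of \emph{nonzero} indices of length $0,1,\dots,N$, where your strict-decrease claim does hold verbatim and the diagonal is genuinely $N!$ (up to the stabiliser count).
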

\begin{proof}
  The argument is identical to the $d = 1$ case. 
\end{proof}

\begin{remark}[Examples of Basis Sets]
\label{rem:examples}
{\it (i) Tensor product Chebyshev polynomials. }
The first concrete case we consider is $\Omega = [-1, 1]^d$ with one-particle basis $\Phi := \big\{ T_{\bm k} := \otimes_{i = 1}^d T_{k_i}
                  \,\colon\, 
                  {\bm k} \in \N^d \big\}.
$
This is the simplest setting to which our analysis applies. In this case, we would simply take $\operatorname{deg} T_k = k$.

{\it (ii) Two-dimensional rotational symmetry. }
Assuming a rotationally symmetric domain,  
 $\Omega := \{ x \in \R^2 \,\colon \, r_0 \leq |x| \leq r_1 \}$,
a natural one-body basis is
\[
  \Phi := \big\{ \phi_{n m}(x) = R_{nm}(|x|) e^{i m\, {\arg x}}
          \, \colon\, 
            n \in \N, m \in \mathbb{Z} \big\},
\]
where the radial basis $R_n$ or $R_{nm}$ could, e.g., be chosen as transformed Zernike, Chebyshev, or other orthogonal polynomials. This has the natural associated total degree definition 
${\rm deg} \phi_{nm}  := n + |m|.$

{\it (iii) Three-dimensional rotational symmetry. }
Our final example concerns three-dimensional spherical symmetry, where the one-particle domain is given by $\Omega := \{ x \in \R^3 \,\colon\, r_0 \leq |x| \leq r_1 \}.$
In this case, it is natural to employ spherical harmonics to discretize the angular component, i.e., 
\[
  \Phi := \big\{ \phi_{nlm}(x) := R_{nl}(|x|) Y_l^m(\hat{x}) 
         \,\colon\,  n, l \in \N, m = -l, \dots, l \big\},
\]
with associated degree ${\rm deg} \phi_{nlm}  := n + l + |m|.$
Since $|m| \leq l$ and since $Y_l^m$ are coupled under rotations it is often more natural though to take $\operatorname{deg} \phi_{nlm} = n + l$, which does not change our results. 
\end{remark}

\begin{remark}[Lie Group Symmetries]
    In many physical modelling scenarios one would also like to incorporate Lie group symmetries, e.g. invariance or covariance under rotations and reflections, or under in relativity theory under the Lorentz group. This can in principle be incorporated into our analysis, but there in general the coupling between the permutation group and such Lie groups is non-trivial and it becomes difficult to give sharp results. We therefore postpone such an analysis to future works. 
    
    That said, such additional symmetry does not reduce the parameterisation complexity nearly as much as the permutation symmetry does. For example in case (ii) of the previous remark, if rotational symmetry were imposed then this would only lead  to an additional constraint $\sum_{t = 1}^N m_t = 0$ on the basis functions. Secondly, such structural sparsity changes our remarks (e.g., Remark~\ref{rmk:cost}) on computational complexity. We show in \cite{Kall2021} that asymptotically optimal evaluation algorithms can still be constructed for those cases.
\end{remark}

\begin{remark}[Computational Cost]
\label{rmk:cost}
  We briefly justify our claim that the computational cost for both \eqref{eq:cheb_total_deg_approx_nosym} and \eqref{eq:cheb_totaldeg_withsym} is directly proportional to the number of parameters. For the sake of brevity we focus only on the evaluation \eqref{eq:cheb_totaldeg_withsym} in two stages. First, we compute the symmetric one-body basis $A_\vi$ defined in \eqref{eq:1d:onebodybasis}, where $\vi = 0, \dots, \max_{\phi_v \in \Phi} \operatorname{deg}(\phi_v)$. According to {\rm($\Phi 4$)} there can be at most $c_d(D)$ elements of that basis. We shall assume that the cost of evaluating it scales as $\mathcal{O}(N D^d)$, which can for example be achieved by suitable recursive evaluations of a polynomial basis. 

Once the one-body basis $A_\vi$ is precomputed and stored, the products $\AA_\vii$ can be evaluated recursively. Assume that the multi-indices are stored in a lexicographically ordered list. As we traverse the list we store the computed basis functions $\AA_{\vii}$. For each new multi-index in the list, 
\[
  \vii = (\vi_1, \dots, \vi_N), 
\] 
we express the associated basis function as 
\[
  \AA_{\vii} = N^{-1} A_{\vi_1} \AA_{0, \vi_2, \dots, \vi_N},
\]
thus evaluating  $\AA_{\vii}$ with $\mathcal{O}(1)$ cost. Note that the multi-indices $\vii$ in the total degree approximations are a downset, hence if $(\vi_1, \dots, \vi_N)$ is part of the approximation then the new multi-index $(0, \vi_2, \dots, \vi_N)$ is also part of the approximation. In summary we therefore obtain that the total cost in evaluating \eqref{eq:cheb_totaldeg_withsym} is bounded, up to a constant, by
\[
  N D^d + P(N, D).
\]
\end{remark}

\subsection{Approximation errors}
Our starting point in the $d = 1$ case was to relate analyticity of the target function $f$ to approximation rates for the Chebyshev expansion. The same idea can be applied here but would be restrictive. In the multi-dimensional setting there is a far greater choice of coordinates and corresponding basis sets, which are highly problem-dependent. What is essential for our analysis is that the naive unsymmetrized (but sparse) approximation scheme has an exponential rate of convergence. In order to maintain the generality of our results we now formulate this as an assumption, which, loosely speaking encapsulates that $\Phi$ is a ``good'' basis to approximate smooth functions  on $\Omega$ and therefore $\big[\Phi^{\otimes N}\big]$ is a good basis for approximation in $\Omega^N$:

\begin{assumption} \label{as:exprate}
  The target function $f \in C_{\rm sym}(\Omega^N)$ has a sparse polynomial approximate $f_D \in {\rm span}(\Phi^{\otimes N})$, satisfying 
  \begin{equation}
      \label{eq:ddim:totaldeg_apxerr}
      \| f - f_D \|_\infty \leq M e^{-\alpha' D},
  \end{equation}
  where $M, \alpha' > 0$,  $D$ is the total degree of $f_D$ defined by 
  \[
      D = \max_{\vii} {\rm deg}(\vii),
      %\sum_{t = 1}^N {\rm deg} \phi_{v_t}
  \]
  and the maximum is taken over all basis functions $\phi_{\bm v}$ involved in the definition of $f_D$. 
  \end{assumption}

Arguing as above we have 
\[
  \| f - \operatorname{sym}[f_D] \|_\infty \leq 
  \| f - f_D \|_\infty, 
\]
hence we may again assume without loss of generality that $f_D$ is symmetric. It can be represented either in terms of the basis $\operatorname{sym}[\Phi^{\otimes N}]$ or equivalently in terms of the basis $\AA$, which optimizes the evaluation cost. This yields the representation 
\[
    f_D = \sum_{{\rm deg}(\vii) \leq D} \tilde{c}_{\vii} \AA_{\vii}.
\]
To obtain approximation rates in terms of the number of parameters,
\[
    P(N, D, d)  := \#\big\{ \vii \in \mathbb{N}^N \colon \text{ordered, and } {\rm deg}(\vii) \leq D \big\},
\]
we generalize the one-dimensional result of Hardy and Ramanujan~\cite{Hardy1918-gg}.

\begin{theorem}
\label{th:thm_estim_inf_N}
  Assume that Assumption~\ref{as:PHI} is satisfied, in particular {\rm($\Phi$4)} defining the intrinsic dimension $d$. Then, there exists a constant $\betad >0$ such that
  for any $N, D$,  
  \begin{equation} \label{eq:hardyramanujan_multiD}
      P(N, D, d) = \# \big\{ \vii \in \N^N_{\rm ord}: {\rm deg}(\vii) \le D \big\} 
      \leq 
      D e^{\betad D^{d/(d+1)}}.
  \end{equation}
\end{theorem}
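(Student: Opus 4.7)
The plan is to proceed by a generating-function / saddle-point argument that generalises Hardy--Ramanujan to basis functions with multiplicity $c_d(i)$ at degree $i$. The starting reduction is combinatorial: an ordered tuple $\vii \in \N^N_{\rm ord}$ is equivalent to a multiset of one-body basis functions (of some size $\le N$). Because $\phi_0 = 1$ has degree $0$, any multiset of non-zero basis functions of size $k \le N$ of total degree at most $D$ can be padded by $N-k$ copies of $\phi_0$, and conversely every tuple in $\N^N_{\rm ord}$ of degree $\le D$ arises this way (and forces $k \le D$ since non-zero factors have degree $\ge 1$). Hence, writing $q^{=}_k$ for the number of multisets of non-zero basis functions of total degree exactly $k$,
\[
P(N,D,d) \;\le\; \sum_{k=0}^{D} q^{=}_k \;\le\; (D+1)\, \max_{0\le k \le D} q^{=}_k .
\]

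Next I would estimate $q^{=}_k$ through the generating function
\[
F(q) \;=\; \sum_{k\ge 0} q^{=}_k\, q^k \;=\; \prod_{i\ge 1}(1-q^i)^{-c_d(i)},
\]
valid for $q \in (0,1)$ since all coefficients are non-negative. The classical bound $q^{=}_k \le F(q)\,q^{-k}$ (all terms positive) reduces the problem to controlling $\log F(e^{-t})$ for small $t>0$. Expanding $-\log(1-x)=\sum_{\ell\ge 1}x^\ell/\ell$ and using Assumption~\ref{as:PHI}($\Phi 4$), namely $c_d(i)\le C (i+d-1)^{d-1}\le C' i^{d-1}$ for $i$ large (absorbing the bounded terms into a constant), gives
\[
\log F(e^{-t}) \;=\; \sum_{i\ge 1}\sum_{\ell\ge 1} \frac{c_d(i)}{\ell}\,e^{-i\ell t} \;\le\; C' \sum_{\ell\ge 1}\frac{1}{\ell}\sum_{i\ge 1} i^{d-1} e^{-i\ell t}.
\]
The inner sum is bounded by comparison with the integral $\int_0^\infty x^{d-1} e^{-x\ell t}\,dx = \Gamma(d)/(\ell t)^d$, so
\[
\log F(e^{-t}) \;\le\; C' \Gamma(d)\, \zeta(d+1)\, t^{-d} \;=:\; K_d\, t^{-d}.
\]

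Combining, for any $t>0$ and any $k \le D$,
\[
\log q^{=}_k \;\le\; K_d\, t^{-d} + k t \;\le\; K_d\, t^{-d} + D t.
\]
The right-hand side is minimised at $t_\star = (d K_d/D)^{1/(d+1)}$, at which value it equals $\kappa_d\, D^{d/(d+1)}$ for an explicit constant $\kappa_d = K_d^{1/(d+1)}(d^{-d/(d+1)} + d^{1/(d+1)})$. This yields $\max_{k\le D} q^{=}_k \le \exp(\kappa_d D^{d/(d+1)})$, and inserting into the first display gives the claim with $\betad$ any constant slightly larger than $\kappa_d$ (the factor $D+1$ is absorbed into the $D$ prefactor after a trivial adjustment, valid for $D\ge 1$).

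The only real obstacle is the saddle-point step: one must verify that the integral comparison for $\sum_i i^{d-1}e^{-i\ell t}$ is uniformly valid down to $\ell=1$ and small $t$, and that the optimisation in $t$ is feasible, i.e. $t_\star \in (0,1)$, which holds whenever $D$ exceeds a constant depending only on $d$ (for small $D$ the stated inequality is trivial by enlarging $\betad$). Once these details are checked, the rest is bookkeeping. Note that for $d=1$ we have $d/(d+1)=1/2$, recovering the exponent in Lemma~\ref{th:hardyramanujan}.
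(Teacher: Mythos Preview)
Your argument is correct and follows a genuinely different route from the paper's. Both start from the same generating function $F(x)=\prod_{i\ge 1}(1-x^i)^{-c_d(i)}$, but the paper proceeds in the style of Erd\H{o}s: it differentiates $F$ to obtain the recursion $n f(n)=\sum_{ik\le n} c_d(i)\, i\, f(n-ik)$ and then proves $f(n)\le e^{\beta_d n^{d/(d+1)}}$ by strong induction, using the concavity estimate $(n-ki)^{\alpha}\le n^{\alpha}(1-\alpha ki/n)$ and the closed form $\sum_{i}(i{+}d{-}1)\cdots i\,x^{i}=d!\,x/(1-x)^{d+1}$ to sum the resulting series. Your approach instead uses the Chernoff/saddle-point bound $q^{=}_k\le F(q)q^{-k}$, estimates $\log F(e^{-t})\le K_d t^{-d}$ directly, and optimises in $t$. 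Your route is shorter and more transparent; the paper's route yields an explicit constant $\beta_d=\tfrac{d+1}{d}\bigl(c\,d!\,p_d\,\zeta(d{+}1)\bigr)^{1/(d+1)}$ and, perhaps more importantly, the same recursion--induction machinery is reused later to prove the sharper $N$-dependent bound \eqref{thm:hardyramanujan_multiD}, which a pure saddle-point argument does not deliver as readily.

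One small point to tighten: the bound $\sum_{i\ge 1} i^{d-1}e^{-i\ell t}\le \Gamma(d)/(\ell t)^d$ does not follow from a bare integral comparison when $d\ge 2$, since $x\mapsto x^{d-1}e^{-xs}$ is unimodal rather than monotone. A clean fix is to use $i^{d-1}\le (d{-}1)!\binom{i+d-2}{d-1}$ and the identity $\sum_{i\ge 1}\binom{i+d-2}{d-1}x^{i}=x(1-x)^{-d}$ to get $\sum_{i\ge 1} i^{d-1}e^{-is}\le (d{-}1)!\,e^{-s}(1-e^{-s})^{-d}\le C_d s^{-d}$, which is all you need. You already flagged this step as the one needing verification, so this is just a matter of filling in the detail.
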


\begin{proof}
  We adapt the proof of Erd\"os presented in~\cite{Erdos1942-qa} for the estimation of the number of integer partitions to the current context.
  First, we define the partition function
  \begin{equation}
      \label{eq:partition_function}
      F(x) = \prod_{i=1}^\infty (1 + g(i,1) x^i + g(i,2) x^{2i} + \cdots),
  \end{equation}
  where $g(i,k)$ denotes the number of $k$-body basis functions composed only of one-body basis functions with partial degree $i$.
  Then, in the decomposition of $F(x)$ as 
  \begin{equation}
      \label{eq:partition_function2}
      F(x) = \sum_{n\ge 0} f(n)x^n,
  \end{equation}
  the coefficient $f(n)$ is equal to the number of basis functions with total degree $n$. Note that there is no limitations on the number of terms in the product in~\eqref{eq:partition_function}, which corresponds to having no restriction on $N$.

  Now, denoting by $c_d(i)$ the number of one-body basis functions with degree $i$, the number $g(i,k)$ is the number of combinations with repetitions of $k$ in the set $\{1,2,\ldots, c_d(i) \}$, which is
  \[
      g(i,k) = {k + c_d(i)-1 \choose k} = {k + c_d(i)-1 \choose c_d(i)-1}.
  \]  
  Then, for $0< x <1$, a well-known identity on power series gives
      \begin{align*}
      \sum_{k=0}^\infty {k + c_d(i)-1 \choose c_d(i)-1} x^{k} = 
      \frac{1}{(c_d(i)-1)!} \; \frac{\mathrm{d}^{c_d(i)-1}}{\mathrm{d}x^{c_d(i)-1}} (1-x)^{-1} =
      \frac{1}{(1-x)^{c_d(i)}}.
  \end{align*}
   
  Hence, the partition function $F(x)$ can be written as
  \[
      F(x) = \prod_{i=1}^\infty \frac{1}{(1-x^i)^{c_d(i)}}.
  \]
  Taking the logarithm of $F(x)$, we obtain
  \[
      \log F(x) = - \sum_{i=1}^\infty c_d(i) \log (1-x^i),
  \]
  from which we deduce, differentiating this expression, that
  \[
      \frac{F'(x)}{F(x)} = \sum_{i=1}^\infty c_d(i) \frac{i x^{i-1}}{1-x^i}.
  \]
  Using the expansion~\eqref{eq:partition_function2}, we write
  $F'(x) = \sum_{n\ge 1} nf(n)x^{n-1}$, which leads to
  \[
      \sum_{n\ge 1} nf(n)x^{n-1} 
      = \sum_{l \ge 0} \sum_{i\ge 1} f(l) x^l c_d(i) \frac{i x^{i-1}}{1- x^i}.
  \]
  Therefore, expanding $(1-x^i)^{-1}$ as a power series,
  \[
      \sum_{n\ge 1} nf(n)x^{n-1} 
      = \sum_{l \ge 0} \sum_{i\ge 1} \sum_{k \ge 0} 
      f(l)  c_d(i) i x^{l+ i(k+1) -1}.
  \]
  Multiplying by $x$ on both sides, and substituting $k = k+1$, we obtain
  \[
      \sum_{n\ge 1} nf(n)x^{n} 
      = \sum_{l \ge 0} \sum_{i\ge 1} \sum_{k \ge 1} f(l) 
      c_d(i) i x^{l+ ik}.
  \]
  Hence, equating the coefficients of the power series on both sides, there holds
  \[
      n f(n) = \sum_{\substack{i\ge 1, k \ge 1 \\ ki \le n}}
      c_d(i) i  f(n - ki).
  \]
  Let us now show by induction that $f(n) \le e^{\betad n^{\alpha}}$, with
  \begin{equation}
      \label{eq:betad}
      \betad = \frac{d+1}{d} \left(c d! p_d \zeta(d+1) \right)^{1/(d+1)},
  \end{equation}
  where 
  \begin{equation}
      \label{eq:p_d}
      p_d = \sup_{x \in \R^+}  \frac{x^{d+1} e^{-x}}{(1-e^{-x})^{d+1}},
  \end{equation}
  $c$ comes from ($\Phi$4)
  and $\displaystyle\alpha = \frac{d}{d+1}$.
  
  First, $f(0) = 1$, therefore the property $f(n) \le e^{\betad n^{\alpha}}$ is satisfied for $n = 0$. 
   
  Now, we assume that for any $k < n$, $f(k) \le e^{\betad k^{\alpha}}$. Using the recurrence property, there holds
  \begin{align}
      \label{eq:proof_multi1}
      n f(n) &= \sum_{\substack{i,k \ge 1 \\ ki \le n}}
      c_d(i) i  f(n - ki) 
      \le  \sum_{\substack{i,k \ge 1 \\ ki \le n}}
      c_d(i) i e^{\betad (n-ki)^{\alpha}}.
  \end{align}
  Then, using the concavity of the function $x \mapsto x^\alpha$ and noting that $\frac{ki}{n} \le 1$, we obtain
  \begin{align*}
      (n-ki)^{\alpha} &= n^\alpha \left(1 - \frac{ki}{n} \right)^\alpha  \le n^\alpha \left(1 - \alpha \frac{ki}{n} \right).
  \end{align*}
  Inserting the latter bound in~\eqref{eq:proof_multi1} and using Assumption~($\Phi$4), we deduce
  \begin{align*}
      n f(n) 
      & \le
      c e^{\betad n^\alpha} \sum_{\substack{i,k \ge 1 \\ ki \le n}}
      (i+d-1) \dots (i+1) i   e^{- \betad \alpha ki n^{\alpha-1}}.
  \end{align*}
  Summing over all $i\in\N$, using that for $0< x <1$, 
  \begin{align*}
      \sum_{i\ge 1}^\infty (i+d-1) \dots (i+1) i x^{i} = 
      x \left[ \frac{1}{1-x}   \right]^{(d+1)} =
      \frac{d! \; x}{(1-x)^{d+1}},
  \end{align*}
  we obtain, taking $x = e^{- \betad \alpha k n^{\alpha-1}}$,
  \begin{align*}
      n f(n) 
      & \le
      c \; d! \; e^{\betad n^\alpha} 
      \sum_{k \ge 1} 
      \frac{e^{- \betad \alpha k n^{\alpha-1}}}{\left( 
         1- e^{- \betad \alpha k n^{\alpha-1}}
         \right)^{d+1} }
        .
  \end{align*}
  Introducing $p_d$ defined in~\eqref{eq:p_d}, we get
  \begin{align*}
      n f(n) &\le c \; d! \; p_d \; e^{\betad n^\alpha} \sum_{ k \ge 1}
      \frac{1}{ ( \betad \alpha k n^{\alpha-1})^{d+1}} \\
      & =  c \; d! \; p_d \;  \frac{\zeta(d+1) }{\betad^{d+1} \alpha^{d+1}}  n^{(d+1)(1-\alpha)} e^{\betad n^\alpha}.
  \end{align*}
  Since $\alpha = \frac{d}{d+1}$ and $\betad = \frac{d+1}{d} \left(c d! p_d \zeta(d+1) \right)^{1/(d+1)}$, we obtain
  \[
      n f(n) \le n e^{\betad n^\alpha},
  \]
  from which we deduce that for all $n\in\N$, $f(n) \le e^{\betad n^\alpha}$, i.e.
  \[
      \# \big\{ \vii \in \N^N_{\rm ord}: {\rm deg}(\vii) = D \big\} \le e^{\betad D^{d/(d+1)}}.
  \]
  We then easily deduce the result of the theorem.
\end{proof}

With the basis size estimate in hand, we can now readily extend the one-dimensional approximation rate.

\begin{theorem} \label{th:general:main}
  Under Assumptions~\ref{as:PHI} and \ref{as:exprate} there exist constants $\alpha, M > 0$ such that 
  \[
      \| f - f_D \|_\infty 
      \leq M  
      e^{ - \alpha [\log P]^{1+1/d} }
  \]
\end{theorem}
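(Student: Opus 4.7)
The plan is to repeat the strategy of Theorem~\ref{th:mainresult_1d} but with the sharper basis‐size estimate from Theorem~\ref{th:thm_estim_inf_N} replacing the Hardy--Ramanujan bound. Concretely, Assumption~\ref{as:exprate} gives the raw convergence rate $\|f-f_D\|_\infty \le M e^{-\alpha' D}$ in terms of the total degree $D$, while Theorem~\ref{th:thm_estim_inf_N} controls the number of parameters $P = P(N,D,d)$ of the symmetric total-degree approximant. The only task is to invert the parameter bound to express $D$ as a (lower) function of $\log P$ and to substitute.

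First I would take logarithms in \eqref{eq:hardyramanujan_multiD} to obtain
\[
  \log P \;\le\; \log D + \beta_d\, D^{d/(d+1)}.
\]
The log term is of lower order, so for $D$ larger than some threshold $D_0$ (depending only on $d$ and $\beta_d$) we have $\log D \le \beta_d D^{d/(d+1)}$, hence
\[
  \log P \;\le\; 2\beta_d\, D^{d/(d+1)},
  \qquad\text{i.e.,}\qquad
  D \;\ge\; \left(\frac{\log P}{2\beta_d}\right)^{1+1/d}.
\]
Inserting this lower bound for $D$ into the error estimate of Assumption~\ref{as:exprate} gives
\[
  \|f - f_D\|_\infty \;\le\; M\, \exp\!\left(-\alpha' \left(\tfrac{\log P}{2\beta_d}\right)^{1+1/d}\right)
  \;=\; M\, e^{-\alpha [\log P]^{1+1/d}},
\]
with $\alpha := \alpha' / (2\beta_d)^{1+1/d}$, which is exactly the claimed bound in the regime $D \ge D_0$.

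Finally, I would handle the small-$D$ regime by adjusting the prefactor: for $D \le D_0$ the quantity $[\log P]^{1+1/d}$ is bounded by a constant depending only on $d, D_0, N$-independent parts of the basis size estimate, so the bound holds trivially after enlarging $M$ if necessary. I do not expect any real obstacle here; the only point requiring a little care is the absorption of the $\log D$ factor from \eqref{eq:hardyramanujan_multiD} into the stretched-exponential term, which is a standard dominant-balance argument and accounts for the factor $2$ appearing in front of $\beta_d$ above.
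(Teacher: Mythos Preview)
Your proposal is correct and follows essentially the same route as the paper: take logarithms in \eqref{eq:hardyramanujan_multiD}, absorb the lower-order $\log D$ term into the $\beta_d D^{d/(d+1)}$ term, invert, and substitute into Assumption~\ref{as:exprate}. The only cosmetic difference is that the paper avoids your case split by defining $c_d := \max_{D\ge 1} \frac{\log D}{\beta_d D^{d/(d+1)}}$ (finite, since $\log D / D^{d/(d+1)} \to 0$), so that $\log P \le (1+c_d)\beta_d D^{d/(d+1)}$ holds for all $D$ and one obtains $\alpha = \alpha'/((1+c_d)\beta_d)^{1+1/d}$ directly without a separate small-$D$ argument.
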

\begin{proof}
  This follows immediately from \eqref{eq:ddim:totaldeg_apxerr} and, 
  denoting $c_d = \max_D \frac{1}{\beta_d} \frac{\log D}{D^{d/(d+1)}}$, using that $\log P \leq \beta_d D^{d/(d+1)} + \log D$ implies\[
     - D \leq - \left[\frac{\log P}{(1+c_d)\beta_d} \right]^{1+1/d}
  \]
  and absorbing $\beta_d$ into $\alpha := \alpha' / ((1+c_d)\beta_d)^{1+1/d}$.
\end{proof}

\subsection{Asymptotic Results for $D \gg N^{1+1/d}$}
\label{sec:sharpness}
Our results up to this point are uniform in $N, D$ but in fact, they turn out to be sharp only in the regime of fairly moderate degree $D$, specifically for $D \lesssim N^{1+1/d}$. Our final set of results provides improved complexity and error estimates for the regime $D \gg N^{1+1/d}$. In particular, they will also provide strong indication that Theorem~\ref{th:general:main} is sharp in the regime $D \lesssim N^{1 + 1/d}$.

Here, and below it will be convenient to use the symbols $\lesssim, \gtrsim, \eqsim$ to mean less than, greater than, or bounded above and below up to uniform positive constants.

Our subsequent analysis is based on the fact that there exist constants $c_0, {c_1}$ such that, for $D \geq c N^{1+1/d}$ with $c$ sufficiently large, 
\begin{equation} \label{eq:general:assume_bounds}
  c_0^N \frac{D^{dN}}{(dN)! N!} 
  \leq P 
  \leq 
  c_1^N \frac{D^{dN+1}}{(dN)! N!}.
\end{equation}
The lower bound is straightforward to establish. The upper bound for $d = 1$ follows immediately from \eqref{eq:params_regimes}. For $d > 1$, and in the limit $D \to \infty$ it is again straightforward to establish. We prove a slightly modified generic upper bound for $d \ge 1$ below. 
We then recover the upper bound \eqref{eq:general:assume_bounds} from the following theorem noting that $\frac{D^{dN + 1}}{(dN)! (N-1)!} \leq c^N  \frac{D^{dN+1}}{(dN)! N!}$ with $c = 3^{1/3} \approx 1.44$.
(The constants in (\ref{eq:general:assume_bounds}) and  (\ref{thm:hardyramanujan_multiD}) are different). 

\begin{theorem}
   Assume that Assumption~\ref{as:PHI} is satisfied, in particular {\rm($\Phi$4)} defining the intrinsic dimension $d$. Then, there exist two constants $c_1,c_2>0$ that depend on $d$ such that
   for any $N, D$,  
   \begin{equation} \label{thm:hardyramanujan_multiD}
      P(N, D, d) = \# \big\{ \vii \in \N^N_{\rm ord}: {\rm deg}(\vii) \le D \big\} 
      \leq 
      D c_1^N \frac{(D+c_2 N^{\frac{d+1}{d}})^{dN}}{ (dN)! (N-1)!}.
   \end{equation}
\end{theorem}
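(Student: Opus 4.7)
The plan is to emulate Erd\H{o}s's elementary partition-counting argument but in a $d$-dimensional lattice with multiplicity, converting multisets of size $N$ into $N$-subsets via a shift whose total weight cost is $O(N^{(d+1)/d})$. The three main steps are: (i) embed the one-body basis into $\N^d \times [S]$ for an appropriate multiplicity $S$, (ii) apply a shift that turns weakly increasing $N$-tuples (multisets) into strictly increasing ones ($N$-subsets), and (iii) count $N$-subsets using the standard compositions formula.

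For the embedding, Assumption~($\Phi 4$) gives $c_d(i) \leq c(d-1)!\binom{i+d-1}{d-1}$, and since $\binom{i+d-1}{d-1}$ is exactly the number of $j \in \N^d$ with $\sum_\ell j_\ell = i$, the one-body basis functions of degree $i$ inject into $\{j \in \N^d : \sum_\ell j_\ell = i\} \times [S]$ with $S := c(d-1)!$. Applied componentwise to $\vii \in \N^N_{\rm ord}$, this maps the set being counted into multisets of size $N$ from $\N^d \times [S]$ whose total first-coordinate sum (``weight'') is at most $D$.

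For the shift, I would enumerate $\N^d \times [S]$ as $e_1, e_2, \ldots$ in weakly increasing order of weight $w(e_k) := \sum_\ell (e_k)_\ell$. Since there are $S\binom{m+d}{d} \eqsim m^d$ elements of weight $\leq m$, one obtains matching two-sided bounds $w(e_k) \eqsim k^{1/d}$. For a sorted multiset $e_{i_1} \preceq \cdots \preceq e_{i_N}$, apply the shift $e_{i_t} \mapsto e_{i_t + t-1}$; strict increase follows from $i_t \leq i_{t+1}$, so the image is an $N$-subset. Using sub-additivity of $x \mapsto x^{1/d}$ (valid for $d \geq 1$), the shifted weight obeys $\sum_t w(e_{i_t + t - 1}) \lesssim \sum_t i_t^{1/d} + \sum_t (t-1)^{1/d}$. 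The first sum is controlled via the lower bound $w(e_k) \gtrsim k^{1/d}$ by $\sum_t i_t^{1/d} \lesssim \sum_t w(e_{i_t}) \leq D$, and the second by $\sum_t (t-1)^{1/d} \lesssim N^{(d+1)/d}$. Hence the shifted weight is at most $C_1 D + c_2 N^{(d+1)/d}$ for some $d$-dependent $C_1, c_2$.

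Finally, the number of $N$-subsets of $\N^d \times [S]$ with total weight $\leq D' := C_1 D + c_2 N^{(d+1)/d}$ is bounded by $\frac{1}{N!} S^N \binom{D' + Nd}{Nd} \leq \frac{S^N (D' + Nd)^{dN}}{N!(dN)!}$, since ordered $N$-tuples with weight $\leq D'$ number exactly $S^N \binom{D' + Nd}{Nd}$ (choose a label vector in $[S]^N$ and an $N \times d$ non-negative integer matrix with entries summing to $\leq D'$). Absorbing $Nd$ into the $N^{(d+1)/d}$ term, extracting $C_1^{dN}$ into $c_1^N$ with $c_1 = SC_1^d$, and using $N/N! = 1/(N-1)!$ together with $DN \geq 1$, yields the stated bound. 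The main obstacle will be the sharpness of the shift estimate: to achieve the exponent $(d+1)/d$ one must exploit both the upper and the lower bound in $w(e_k) \eqsim k^{1/d}$ together with sub-additivity, so that $\sum_t i_t^{1/d}$ becomes a constant multiple of $D$ rather than scaling with $N$ or $D^{1+\varepsilon}$; the small-$k$ boundary case, where the lower bound is not tight, is absorbed into $c_2$ by noting $N \leq N^{(d+1)/d}$ for $d \geq 1$.
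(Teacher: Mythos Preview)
Your argument is correct and takes a genuinely different route from the paper's proof. The paper works analytically: it introduces the bivariate generating function
\[
   F(x,y)=\prod_{i\ge 1}(1-x^iy)^{-c_d(i)},
\]
derives from it the recurrence
$m f(n,m)=\sum_{ik\le n,\ k\le m} c_d(i)\,f(n-ik,m-k)$,
and then establishes $f(n,m)\le c_1^m (n+c_2 m^{(d+1)/d})^{dm}/\bigl((dm)!\,m!\bigr)$ by a somewhat delicate induction, supported by two lemmas that bound sums of the form $\sum_{i\le n/k} i^{q}(n-ik+\gamma)^p$.

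Your approach is purely combinatorial: embed the one-body basis into $\N^d\times[S]$, apply the staircase shift $i_t\mapsto i_t+t-1$ to turn size-$N$ multisets into size-$N$ subsets, and finish with a stars-and-bars count. The crucial observation, which you handle correctly, is that the two-sided estimate $w(e_k)\eqsim k^{1/d}$ controls the weight increase of the shift by $\sum_{t\le N}(t-1)^{1/d}\lesssim N^{(d+1)/d}$ while simultaneously letting $\sum_t i_t^{1/d}\lesssim D+N$; the residual $N$ is harmlessly absorbed since $N\le N^{(d+1)/d}$. This makes the origin of the exponent $(d+1)/d$ completely transparent, whereas in the paper it emerges from balancing the induction hypothesis. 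Your argument is also shorter and in fact delivers a bound that is a factor $DN$ stronger than the stated one (you obtain $N!$ rather than $(N-1)!$ in the denominator and no prefactor $D$), which you then relax. What the paper's approach buys in return is continuity with the Erd\H{o}s-style machinery already used for Theorem~\ref{th:thm_estim_inf_N}, and explicit tracking of the constants $c_1,c_2$ through $\alpha_d$.
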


\begin{proof}
   First, we define the partition function
   \begin{equation}
      \label{eq:partition_function_xy}
      F(x,y) = \prod_{i=1}^\infty (1 + g(i,1) x^i y + g(i,2) x^{2i} y^2 + \cdots),
   \end{equation}
   where $g(i,k)$ denotes the number of $k$-body basis functions composed only of one-body basis function with partial degree $i$.
   Then, in the decomposition of $F(x,y)$ as 
   \begin{equation}
      \label{eq:partition_function2_xy}
      F(x,y) = \sum_{n,m\ge 0} f(n,m)x^n y^m,
   \end{equation}
   the coefficient $f(n,m)$ is equal to the number of basis functions with total degree $n$ and number of parts $m$. 

   Now, as in the proof of Theorem~\ref{th:thm_estim_inf_N},
   denoting by $c_d(i)$ the number of one-body basis functions of degree $i$, 
   we can write the partition function $F(x,y)$ as
   \[
      F(x,y) = \prod_{i=1}^\infty \frac{1}{(1-x^i y)^{c_d(i)}}.
   \]
   Taking the logarithm of $F(x,y)$, we obtain
   \[
      \log F(x,y) = - \sum_{i=1}^\infty c_d(i) \log (1-x^i y),
   \]
   from which we deduce, differentiating this expression with respect to $y$, that
   \[
      \frac{\frac{\partial F}{\partial y}(x,y)}{F(x,y)} = \sum_{i=1}^\infty c_d(i) \frac{ x^i}{1-x^i y}.
   \]
   Using the expansion~\eqref{eq:partition_function2_xy}, we write
   $\frac{\partial F}{\partial y}(x,y) = \sum_{n,m\ge 0} m f(n,m) x^{n} y^{m-1}$, which leads to
   \[
   \begin{aligned}
      \sum_{n,m\ge 0} m f(n,m) x^{n} y^{m-1}
      &= \sum_{p,q \ge 0} f(p,q) x^p y^q \sum_{i\ge 1}  c_d(i) x^i \left[
      \sum_{k\ge 0} (x^i y)^k \right] \\
      &=\sum_{p,q \ge 0} \sum_{i\ge 1}  \sum_{k\ge 0} f(p,q) c_d(i) x^{p+i(k+1)} y^{k+q}.
   \end{aligned}
   \]
   Multiplying by $y$ on both sides, and substituting $k+1$ for $k$, we obtain
   \[
      \sum_{n,m\ge 0} m f(n,m) x^{n} y^{m}
      =\sum_{p,q \ge 0} \sum_{i,k\ge 1} f(p,q) c_d(i) x^{p+ik} y^{k+q}.
   \]
   Hence, equating the coefficients of the power series on both sides, we conclude
   \begin{equation}
   \label{eq:rec_equation}
             m f(n,m) = \sum_{\substack{i\ge 1, k \ge 1 \\ ki \le n \\ k \le m}}
      c_d(i) f(n - ki,m-k).
   \end{equation}
We now show by induction that for any $c_1,c_2>0$ that satisfy
\begin{equation}\label{eq:c1c2cond}
  c_1 c_2^d > d^d \;\text{ and }\;  (c_1-\widetilde{c} \alpha_d) c_2^d \ge d^d,
\end{equation}
where $\tilde c$ is the constant appearing in ($\Phi$4), $\alpha_1 = 1$, and $\alpha_d = \max\{(d-1)!\, 2^d, (d-1)! + d(d-1)^{d-1}\}$ for $d\ge 2$, we have
\begin{equation}\label{eq:festimate}
 f(n,m) \le c_1^m \frac{(n+c_2 m^{\frac{d+1}{d}})^{dm}}{(dm)! m!}.
  \end{equation}
Note that \eqref{eq:c1c2cond} holds in particular for the choice $c_1 = 1+ \widetilde{c} \alpha_d$ and $c_2 = d$.
  
   First, we observe that $f(0,0) = 1$, $f(n,0) = 0$ for $n\ge 1$, $f(0,m) = 0$ for $m\ge 1$, $f(1,1) = c_d(1) \le \widetilde{c}$, and $f(1,m) = 0$ for all $m\ge 2$. Thus \eqref{eq:festimate} holds in each of these cases. 

 Now for arbitrary but fixed $m$ and $n$, we assume that \eqref{eq:festimate} holds true for any $k < m$, $i < n$. Using the recurrence property, starting from~\eqref{eq:rec_equation}, and noting that $c_d(i)$ appearing in ($\Phi$4) can be bounded by $\tilde c i^{d-1}$ for some $\tilde c>0$, we find
   \begin{align}
       m f(n,m) 
          & \le  \sum_{\substack{i,k \ge 1 \\ ki \le n \\ k \le m}}
      c_d(i) c_1^{m-k} \frac{\left(n-ik + c_2(m-k)^{\frac{d+1}{d}}\right)^{d(m-k)}}{(d(m-k))! (m-k)!} \nonumber \\
      & \le \widetilde{c}
      \sum_{1\le k\le m} \frac{c_1^{m-k}}{(d(m-k))! (m-k)!}
      S(n,k,c_2 (m-k)^{\frac{d+1}{d}}, d(m-k), d-1),
      \label{eq:estim_with_S}
   \end{align}
   with
   \[
        S(n,k,\gamma,p,q) = \sum_{1\le i \le \floor*{\frac{n}{k}}} (n-ik + \gamma)^p i^q,
   \]
   where $\gamma \geq 0$, $p,q,k \in \N$, $k\ge 1$. For estimating this quantity, we use the following two lemmas.
   
   \begin{lemma}
    \label{lem:S_estim1}
      For $\gamma \geq 0$, $n,k,p,q  \in \N$, $k\ge 1$, we have
      \begin{equation}\label{eq:Sbound}
        S(n,k,\gamma,p,q) \le \beta(q,p,k,\gamma) \frac{(n+\gamma)^{p+q+1}}{(p+1)(p+2) \ldots (p+q+1)},
      \end{equation}
      with $\beta(0,p,k,\gamma)=1$, $\beta(q,0,k,\gamma) = q!\, 2^{q+1}$, and $\beta(q,p,k,\gamma) = \frac{q! }{k^{q+1}} + \frac{p^p q^q }{k^q (1+\gamma)}
              \frac{(p+q+1)}{(p+q)^p}$ for $p,q\ge 1$. 
   \end{lemma}
   \begin{proof}
    To prove \eqref{eq:Sbound}, we introduce the function $g: x \mapsto (n-kx+\gamma)^p x^q$ and compare the sum $S(n,k,\gamma,p,q) = \sum_{1\le i \le \floor*{\frac{n}{k}}} g(i)$ to the integral of $g$.
    Note that \eqref{eq:Sbound} trivially holds for $n = 0$. Hence we assume $n\ge 1$.
    We treat three different cases depending on the values of $p$ and $q$.
   
    If $q=0$, the function $g$ decreases on $[0, \frac{n}{k}]$. Therefore, 
      \[
        S(n,k,\gamma,p,q) \le \int_0^{\frac{n}{k}} g(x) dx = \left[ -\frac{(n-kx+\gamma)^{p+1}}{(p+1)k} \right]_0^{\frac{n}{k}} \le 
        \frac{(n+\gamma)^{p+1}}{(p+1)k}.
      \]
      Since $k \ge 1$, \eqref{eq:Sbound} holds in this case, with $\beta(0,p,k,\gamma) = 1$.
      
     If $p=0$, using $k\ge 1$, we obtain
      \[
        S(n,k,\gamma,p,q) \le 
        \frac{(\frac{n}{k}+1)^{q+1}}{q+1} \le \frac{(n+1)^{q+1}}{q+1} \le \frac{n^{q+1}}{(q+1)!} q! \left(\frac{n+1}{n}\right)^{q+1}.
      \]
      Hence \eqref{eq:Sbound} is satisfied with $\beta(q,0,k,\gamma) = q! \,2^{q+1}$.
      
      As the third case, we consider $p \ge 1, q \ge 1$. For $x \ge 0$,
      \[
        g'(x) = (n-kx + \gamma)^{p-1} x^{q-1} \left( q(n+\gamma) - xk(p+q) \right),
      \]
      so the function $g$ is increasing on $\left[0, \frac{q(n+\gamma)}{k(p+q)}\right]$, and decreasing on $\left[\frac{q(n+\gamma)}{k(p+q)} ,+\infty\right[$. In particular, the function has a single local maximum in $x = \frac{q(n+\gamma)}{k(p+q)}$. Hence, there holds
      \[
        \sum_{1\le i \le \floor*{\frac{n}{k}}} g(i) \le 
        \int_0^{\frac{n}{k}} g(x) dx + g\left(\frac{q(n+\gamma)}{k(p+q)}\right).
      \]
      Integrating by parts $q$ times, we obtain
      \begin{align*}
          \int_0^{\frac{n}{k}} g(x) dx &= \left[ 
          - \frac{x^q (n-kx+\gamma)^{p+1}}{k(p+1)} \right]_0^{\frac{n}{k}} +  \int_0^{\frac{n}{k}}
          \frac{q}{k(p+1)} (n-kx+\gamma)^{p+1} x^{q-1} dx \\
          & \le \frac{q}{k(p+1)} \int_0^{\frac{n}{k}}
          (n-kx+\gamma)^{p+1} x^{q-1} dx \\
          & \le \frac{q!}{k^q(p+1) \ldots (p+q)}  \int_0^{\frac{n}{k}}
          (n-kx+\gamma)^{p+q} dx \\
          & \le \frac{q! (n+\gamma)^{p+q+1}}{k^{q+1}(p+1) \ldots (p+q+1)} .
      \end{align*}
       Moreover,
      \[
        g \left( \frac{q(n+\gamma)}{k(p+q)} \right) = \left(n-\frac{q(n+\gamma)}{p+q} +\gamma \right)^p
        \left( \frac{q(n+\gamma)}{k(p+q)} \right)^q 
        = \left(n+\gamma \right)^{p+q}
         \frac{p^p q^q}{k^q(p+q)^{p+q}}.
      \]
      Since $n,k,p,q \ge 1,$ we easily deduce
      \begin{align*}
              g \left( \frac{q(n+\gamma)}{k(p+q)} \right) 
              & = \frac{(n+\gamma)^{p+q+1}}{{(p+1)\ldots (p+q+1)}} 
              \frac{p^p q^q }{k^q (n+\gamma)}
              \frac{{(p+1)\ldots (p+q) (p+q+1)}}{(p+q)^{p+q}}
              \\
              & \le \frac{(n+\gamma)^{p+q+1}}{{(p+1)\ldots (p+q+1)}} 
              \frac{p^p q^q }{k^q (n+\gamma)}
              \frac{(p+q+1)}{(p+q)^p},
      \end{align*}
     and thus
      \[
         S(n,k,\gamma,p,q) \le \frac{(n+\gamma)^{p+q+1}}{ (p+1) \ldots (p+q+1)} 
         \left(\frac{q! }{k^{q+1}} + \frac{p^p q^q }{k^q (n+\gamma)}
              \frac{(p+q+1)}{(p+q)^p} \right).
      \]
      Therefore, \eqref{eq:Sbound} holds also in this third case, with $\beta(q,p,k,\gamma) = \frac{q! }{k^{q+1}} + \frac{p^p q^q }{k^q (1+\gamma)}
              \frac{(p+q+1)}{(p+q)^p}$.
     This concludes the proof of the lemma.
   \end{proof}
   
   We now show that the prefactor can be bounded uniformly in $m$ and $k$.
   \begin{lemma}
      For $m,k\in\N,$ $m,k\ge 1$, we have
      \begin{equation}\label{eq:betabound}
           \beta\bigl(d-1,d(m-k),k,c_2 (m-k)^{\frac{d+1}{d}}\bigr) \le \alpha_{d},
      \end{equation}
      with $\alpha_1 = 1,$ and $\alpha_d = \max\{ (d-1)!\, 2^d, (d-1)! + d(d-1)^{d-1}\}$ for $d\ge 2$, provided that $c_2 \ge 1$.
   \end{lemma}

   \begin{proof}
      If $d=1$ (corresponding to $q=0$ in the previous lemma) or $k = m$ (corresponding to $p=0$), the estimate \eqref{eq:betabound} holds with $\alpha_1 = 1$ and $\alpha_d = (d-1)!\, 2^d$, respectively, by Lemma~\ref{lem:S_estim1}.
      Now if $1\le k < m$, 
      \begin{multline*}
       \beta\bigl(d-1,d(m-k),k,c_2 (m-k)^{\frac{d+1}{d}}\bigr) \\ =
          \frac{(d-1)!}{k^d} + \frac{[d(m-k)]^{d(m-k)} (d-1)^{(d-1)} }{k^{d-1} (1+c_2 (m-k)^{\frac{d+1}{d}})}
              \frac{(d(m-k) + d)}{(d(m-k)+d-1)^{d(m-k)}} 
              \\
               \le (d-1)! + \frac{(d-1)^{(d-1)}}{k^{d-1}}
              \frac{d (m-k+1)}{(1+c_2 (m-k)^{\frac{d+1}{d}})}
              \left[\frac{d(m-k)}{d(m-k)+d-1} \right]^{d(m-k)}.
      \end{multline*}
      Noting that $\frac{d(m-k)}{d(m-k)+d-1} \le 1$, $k\ge 1$, and that 
      $\frac{d (m-k+1)}{1+c_2 (m-k)^{\frac{d+1}{d}}} \le d$ if $c_2 \ge 1$,
      we obtain
      \[
        \beta\bigl( d-1,d(m-k),k,c_2 (m-k)^{\frac{d+1}{d}} \bigr)
        \le (d-1)! + d(d-1)^{d-1},
      \]
      completing the proof of \eqref{eq:betabound}.
   \end{proof}

  Coming back to~\eqref{eq:estim_with_S} and using the estimates of the two lemmas, we obtain
    \begin{align*}
        m f(n,m) &\le \widetilde{c} \alpha_d
      \sum_{1\le k\le m} \frac{c_1^{m-k}}{(d(m-k))! (m-k)!}
      \frac{(n+c_2(m-k)^{\frac{d+1}{d}})^{d(m-k)+d}}{(d(m-k)+1)(d(m-k)+2) \ldots (d(m-k)+d)} \\
      &=
      \widetilde{c} \alpha_d
      \sum_{1\le k\le m} \frac{c_1^{m-k} (n+c_2(m-k)^{\frac{d+1}{d}})^{d(m-k+1)}}{(d(m-k+1))! (m-k)!} \\
      &\le c_1^m \frac{(n+c_2 m^{\frac{d+1}{d}})^{dm}}{(dm)! m!}
      \widetilde{c} \alpha_d 
      \sum_{1\le k\le m} \frac{1}{c_1^k}
      \frac{(dm)!}{(d(m-k+1))!} \frac{m!}{(m-k)!}
      \frac{(n+c_2(m-k)^{\frac{d+1}{d}})^{d(m-k+1)}}{(n+c_2 m^{\frac{d+1}{d}})^{dm}}.
    \end{align*}
    Combining this with the estimates
    \[
    \begin{gathered} \frac{(dm)!}{(d(m-k+1))!} \le (dm)^{d(k-1)}, \qquad \frac{m!}{(m-k)!} \le m^k,  \\ \frac{(n+c_2(m-k)^{\frac{d+1}{d}})^{d(m-k+1)}}{(n+c_2 m^{\frac{d+1}{d}})^{dm}} \leq \frac{1}{(n + c_2m^{\frac{d+1}{d}})^{d(k-1)}} \leq \frac{1}{(c_2m^{\frac{d+1}{d}})^{d(k-1)}}
    \end{gathered}
    \]
    yields
    \begin{equation*}
         m f(n,m)  \le 
      c_1^m \frac{(n+c_2 m^{\frac{d+1}{d}})^{dm}}{(dm)! m!}
      \widetilde{c} \alpha_d 
      \sum_{1\le k\le m} \frac{(dm)^{d(k-1)} m^k}{c_1^k (c_2m^{\frac{d+1}{d}})^{d(k-1)}}.
    \end{equation*}
    Noting that $\displaystyle  \frac{(dm)^{d(k-1)} m^k}{(c_2m^{\frac{d+1}{d}})^{d(k-1)}} =  m \frac{d^{d(k-1)}}{c_2^{d(k-1)}}$ we obtain, replacing the index $k$ with $k-1$,
    \begin{align*}
         m f(n,m) 
      & \le c_1^m \frac{(n+c_2 m^{\frac{d+1}{d}})^{dm}}{(dm)! m!}
      \widetilde{c} \frac{\alpha_d m}{c_1}
      \sum_{0\le k\le (m-1)} \left( \frac{d^d}{c_1 c_2^d} \right)^k.
    \end{align*}
    Taking $c_1,c_2$ such that $c_1 c_2^d > d^d$ and dividing by $m$ yields
    \[
        f(n,m) \le c_1^m \frac{(n+c_2 m^{\frac{d+1}{d}})^{dm}}{(dm)! m!}
      \widetilde{c} \frac{\alpha_d}{c_1} \frac{1}{1- \frac{d^d}{c_1 c_2^d}}.
    \]
    Therefore, \eqref{eq:festimate} is satisfied if 
    $ \displaystyle
        \widetilde{c} \alpha_d c_2^d \le c_1 c_2^d- d^d,
    $
    that is, if
    $\displaystyle
        c_1 c_2^d \ge \widetilde{c} \alpha_d c_2^d + d^d.
    $
    This completes the induction. 
    Hence, we deduce that
    \begin{align*}
        \# \big\{ \vii \in \N^N_{\rm ord}: {\rm deg}(\vii) = D \big\} & = \sum_{m=0}^N f(D,m)  \le 
      \sum_{m=0}^N c_1^m \frac{(D+c_2 m^{\frac{d+1}{d}})^{dm}}{(dm)! m!}.
    \end{align*}
      Finally, we note that a sufficient condition for $\N \ni m \mapsto c_1^m \frac{(D+c_2 m^{\frac{d+1}{d}})^{dm}}{(dm)! m!}$ to be increasing is that $c_1 c_2^d \ge d^d$. Since this is satisfied under by \eqref{eq:c1c2cond},
      \begin{equation*} 
      \# \big\{ \vii \in \N^N_{\rm ord}: {\rm deg}(\vii) = D \big\} 
      \le 
      c_1^N \frac{(D+c_2 N^{\frac{d+1}{d}})^{dN}}{(dN)! (N-1)!}.
    \end{equation*}
    Summing over the possible values of $D$, we 
    easily deduce the result of the Theorem.
\end{proof}

With this result at hand, we now consider the dependency of the number of parameters on $D$.

\begin{lemma} 
  \label{th:large_D_paramsest}
  Suppose that $D = N^t$ with $t \geq 1 + 1/d$. Then, for sufficiently large $D$,
  \begin{equation}
      \log P 
      \eqsim D^{1/t} + (t -1 - 1/d) D^{1/t} \log D^{1/t}, 
  \end{equation}
  uniformly for all $t \geq 1+1/d$ (but the constants may depend on $d$).
\end{lemma}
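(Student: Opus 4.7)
The plan is to take logarithms of the two-sided bound \eqref{eq:general:assume_bounds}, apply Stirling's formula to the factorials $(dN)!$ and $N!$, and then substitute $N = D^{1/t}$ to read off the leading behaviour. Because the upper and lower bounds in \eqref{eq:general:assume_bounds} differ only by a factor $(c_1/c_0)^N D$, both yield the same leading-order expression for $\log P$, and the $\eqsim$ conclusion reduces to matching coefficients.

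Concretely, Stirling's formula $\log(k!) = k \log k - k + O(\log k)$ gives
\[
  \log\bigl((dN)!\bigr) + \log(N!) = (d+1)N \log N + dN \log d - (d+1) N + O(\log N),
\]
so that taking $\log$ of \eqref{eq:general:assume_bounds} yields
\[
  \log P = dN \log D - (d+1) N \log N + O(N + \log D),
\]
with implicit constants depending only on $d$, $c_0$ and $c_1$. Substituting $D = N^t$ gives $\log D = t \log N$ and $N = D^{1/t}$, and the leading two terms combine as
\[
  dN \log D - (d+1) N \log N
     = (dt - d - 1) N \log N
     = d(t-1-1/d) D^{1/t} \log D^{1/t}.
\]
Hence
\[
  \log P = d(t - 1 - 1/d) D^{1/t} \log D^{1/t} + O(D^{1/t}),
\]
and absorbing the factor $d$ into the $\eqsim$-constants produces the claimed expression $\log P \eqsim D^{1/t} + (t-1-1/d) D^{1/t} \log D^{1/t}$.

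The main obstacle is uniformity in $t \geq 1 + 1/d$. When $t$ is bounded away from $1 + 1/d$ the log-linear term dominates and the $O(D^{1/t} + \log D)$ remainder is easily absorbed. At and near the boundary $t = 1 + 1/d$, however, the coefficient $(t - 1 - 1/d)$ is small or zero, and one must verify the matching $\log P \eqsim D^{1/t} = N$. The upper bound $\log P \lesssim N$ follows either from the $O(N)$ remainder above or, in the exact boundary case where \eqref{eq:general:assume_bounds} may fail to apply, from Theorem~\ref{th:thm_estim_inf_N}, since $D^{d/(d+1)} = N$. The lower bound $\log P \gtrsim N$ follows from the lower bound of \eqref{eq:general:assume_bounds} combined with Stirling (the factor $c_0^N$ together with $D^{dN}/((dN)!N!)$ yields an exponential-in-$N$ growth once $D/N^{1+1/d}$ is sufficiently large), and at $t = 1+1/d$ from the elementary observation that the number of integer partitions of $D = N^{1+1/d}$ with at most $N$ parts already grows exponentially in $N$. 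A brief case split between the two regimes then yields the stated uniform equivalence.
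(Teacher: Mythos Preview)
Your proposal is correct and takes essentially the same approach as the paper: take logarithms of \eqref{eq:general:assume_bounds}, apply Stirling to $(dN)!$ and $N!$, substitute $N = D^{1/t}$, and read off the leading terms. Your treatment of uniformity near the boundary $t = 1+1/d$ is in fact more careful than the paper's, which handles only the upper bound explicitly and dismisses the lower bound as ``analogous.''
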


\begin{proof}
  From \eqref{eq:general:assume_bounds} we can estimate 
  \begin{align*}
      \log P 
      &\leq 
      N \log c_1 +  dN \log D - d N \log\big(dN / e\big) - N \log\big(N/e\big) + \log D
      \\ 
      &= 
      dN \Big( c_2 + \log D - \log N - \log N^{1/d} \Big) + \log D
      \\ 
      &= 
      d D^{1/t} \Big( c_2 + \log D^{1 - 1/t - 1/dt}) + \log D.      
  \end{align*}
  Noting that asymptotically as $D \to \infty$, $\log D \ll D^{1/t}$ we easily obtain the upper bound. The lower bound is analogous. 
\end{proof}

\begin{proposition} \label{th:general:improvedest}
  Suppose that $D = N^t$ for some $t > 1 + 1/d$, and that $N, D$ are sufficiently large, then 
  \begin{equation} \label{eq:general:improvedest1}
      D 
      \gtrsim
      \frac{1}{(t - 1 - 1/d)^t}
      \bigg[ \frac{\log P}{\log\log P} \bigg]^t.
  \end{equation}
\end{proposition}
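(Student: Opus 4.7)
The plan is to invert the asymptotic expression for $\log P$ supplied by Lemma~\ref{th:large_D_paramsest} by taking a second logarithm and solving for $D$. Write $D = N^t$ with $t > 1 + 1/d$, so that $N = D^{1/t}$, and from Lemma~\ref{th:large_D_paramsest} we have
\[
  \log P \eqsim D^{1/t} + (t - 1 - 1/d)\, D^{1/t} \log D^{1/t}.
\]
Since $t > 1 + 1/d$, the coefficient $t - 1 - 1/d$ is strictly positive, and for $D$ sufficiently large the second term dominates, giving $\log P \lesssim (t - 1 - 1/d)\, D^{1/t} \log D^{1/t}$ (with the matching lower bound of the same form).

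Next I take logarithms a second time. Because $\log D^{1/t}$ grows polylogarithmically while $D^{1/t}$ grows polynomially in $D$, one obtains $\log \log P = \log D^{1/t} + O(\log \log D) + O(\log(t - 1 - 1/d))$, and hence $\log \log P \eqsim \log D^{1/t}$ uniformly for large $D$. This is the main quantitative point: the factor $\log D^{1/t}$ appearing inside the bound on $\log P$ can be replaced, up to constants, by $\log \log P$.

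Combining the two estimates, I divide to get
\[
  \frac{\log P}{\log \log P} \lesssim (t - 1 - 1/d)\, D^{1/t},
\]
and then raise both sides to the $t$-th power to obtain
\[
  \bigg[\frac{\log P}{\log \log P}\bigg]^t \lesssim (t - 1 - 1/d)^t \, D,
\]
which upon rearrangement yields \eqref{eq:general:improvedest1}. The only mild obstacle is ensuring the constants hidden by $\eqsim$ in Lemma~\ref{th:large_D_paramsest} and in the replacement $\log D^{1/t} \eqsim \log \log P$ can be tracked uniformly in $t \ge 1 + 1/d$; this is handled by fixing an $N, D$ large enough that the sub-dominant terms $D^{1/t}$ and $\log D$ are absorbed into the main term $(t - 1 - 1/d) D^{1/t} \log D^{1/t}$, which is possible because $t - 1 - 1/d$ is bounded away from $0$ once $t$ is bounded away from $1 + 1/d$, and in any case the constant on the right hand side of \eqref{eq:general:improvedest1} contains the full factor $(t - 1 - 1/d)^{-t}$ that soaks up the degeneracy.
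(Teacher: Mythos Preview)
Your proposal is correct and follows essentially the same route as the paper: start from the two-sided bound of Lemma~\ref{th:large_D_paramsest}, take a second logarithm of the lower bound to obtain $\log D^{1/t} \lesssim \log\log P$, divide the upper bound by this, and raise to the $t$-th power. The paper is slightly more explicit in that it only uses the one inequality $\log D^{1/t} \leq \log\log P$ (rather than the full $\eqsim$) and writes out the intermediate step with named constants $a_0, a_1$, but the argument is the same.
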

\begin{proof}
  From Lemma~\ref{th:large_D_paramsest}, for the case $t > 1 + 1/d$ and $N, D$ sufficiently large we have 
  \begin{equation} \label{eq:prflargeD:100}
      a_0 (t - 1 - 1/d) D^{1/t} \log D^{1/t} \leq \log P \leq a_1 (t - 1 - 1/d) D^{1/t} \log D^{1/t}. 
  \end{equation}
  From the lower bound we deduce 
  \[
      \log (a_0 (t - 1 - 1/d)/t) + \log D^{1/t} + \log\log D^{1/t} \leq
      \log\log P,
  \]
  and for $D$ sufficiently large, this implies 
  \[
      \log D^{1/t} \leq \log\log P, \qquad \text{or,} \qquad 
      \frac{1}{\log\log P} 
      \leq \frac{1}{\log D^{1/t}}.
  \]
  Multiplying this with the right-hand inequality in \eqref{eq:prflargeD:100} yields 
  \[
      \frac{\log P}{\log\log P} 
      \leq 
      a_1 (t - 1 - 1/d) D^{1/t},
  \]
  and taking this to power $t$ gives the stated result.
\end{proof}

In particular, in the regime $D = N^t, t > 1+1/d$, if we start from the exponential convergence rate of Assumption~\ref{as:exprate}, then we obtain that there exists a symmetric approximation $f_D$ with 
\begin{equation} \label{eq:apxrate_largeD}
  \|f - f_D\|_\infty \lesssim \exp\bigg( - \alpha (t-1-1/d)^{-t}  \bigg[ \frac{\log P}{\log\log P} \bigg]^t \bigg),
\end{equation}
for some $\alpha > 0$. This rate is clearly superior to the uniform $N$-independent rate of Theorem~\ref{th:general:main} (in this regime at least). Moreover, as $t \to 1 + 1/d$ it strongly hints at a kind of ``phase transition'', in other words, that the behaviour of the approximation scheme significantly changes at that point. 

\begin{remark} 
  It is furthermore interesting to compare our result \eqref{eq:apxrate_largeD} with analogous estimates if we had not exploited symmetry, or sparsity. An analogous analysis shows that if we use a (sparse) total degree approximation, but use a naive basis instead of a symmetrized basis, then we obtain 
  \[
      \|f - f_D\|_\infty \lesssim \exp\bigg( - \alpha_2 (t-1)^{-t}  \bigg[ \frac{\log P}{\log\log P} \bigg]^t \bigg),
  \]
  Finally, if we drop even the sparsity, so that $P \approx D^{dN}$, then we obtain 
  \[
      \|f - f_D\|_\infty \lesssim
      \exp\bigg( - \alpha_3 t^{-t}\bigg[ \frac{\log P}{\log\log P} \bigg]^t \bigg)
  \]
  Note here that the $t$ dependence of the exponents highlights quite different qualitative behaviour of the three bases. 
  Specifically, this provides strong qualitative evidence that (unsurprisingly) the sparse basis gives a significant advantage over the naive tensor product basis especially in the regime $1 \leq t$, but that it is still significantly more costly than the sparse symmetrized basis. Moreover, these estimates clearly highlight the unique advantage that the symmetrized basis provides in the regime $D \lesssim N^{1+1/d}$ treated in Theorem~\ref{th:general:main}.
\end{remark} 

\newcommand{\jtedit}[1]{\textcolor{magenta}{#1}}

\section{Approximation of multi-set functions}
\label{sec:mset}
Our original motivation to study the approximation of symmetric functions arises in the atomic cluster expansion~\cite{Drautz2019-rb,Bachmayr2019-ec} which is in fact concerned with the approximation of multi-set functions. We now study how our approximation results of the previous sections can be applied in this context. 
Similarly as in the previous sections, this discussion will again ignore isometry-invariance.
We will focus on a general abstract setting, but employ assumptions that can be rigorously established in the setting of~\cite{Drautz2019-rb,Bachmayr2019-ec}. 

Let $\ms(\Omega)$ denote the set of all multi-sets (or, msets) on $\Omega$; i.e., 
\begin{equation} \label{eq:msets}
    \ms(\Omega) := 
    \big\{ {\bm x} = [x_1,\dots, x_M] \colon M \in \mathbb N, \{x_j\}_{j=1}^M \subset \Omega \big\}, 
\end{equation}
where $[x_1, \dots, x_M]$ denotes an {\em unordered tuple} or mset, e.g. describing a collection of (positions of) $M$ classical particles. The crucial aspect is that $M$ is now variable and no longer fixed. 
This is equivalent to the classical definition of a multiset which has the defining feature of allowing for multiple instances for each
of its elements.
We are interested in parameterising (approximating) mset functions
\begin{gather}\label{eq:f}
    f \colon \ms(\Omega) \to \mathbb R.
\end{gather}

\begin{remark}[Context] \label{rem:ms:context}
    This situation arises, e.g., when modelling interactions between particles. Different local or global structures of particle systems lead to a flexible number of particles entering the range of the interaction law. It seems tempting to take the limit $M \to \infty$, which leads to a mean-field-like scenario where a signal processing perspective could be of interest. However, we are interested in an intermediate situation where $M$ is ``moderate''; say, in the range $M = 10$ to $100$, and thus this limit is not of interest. Indeed, the number of interacting particles $M$ can be understood as another approximation parameter, which we discuss in more detail in Remark~\ref{rem:cut-off} below.
\end{remark}

In order to reduce the approximation of an mset function $f$ to our foregoing results we must first produce a representation of $f$ in terms of finite-dimensional symmetric components. A classical idea is the many-body or ANOVA expansion, which we will formulate as an {\em assumption}. However, we emphasize that our recent results \cite{ThomasChenOrtner2021:body-order} rigorously justify this assumption in the context of coarse-graining electronic structure models into interatomic potentials models. The following formulation is modelled on those results, which are summarized in Appendix~\ref{app:body-order-assumption}.

\begin{assumption} \label{as:body-order}
    {\it (i)} For all 
    $N$, there exist symmetric $V_{nN}\colon \Omega^n \to \mathbb R$ for $n = 0,\dots, N$, and $\eta > 0$ such that $f_N \colon \ms(\Omega) \to \mathbb R$, defined by
\begin{gather}
    f_N([x_1,\dots,x_M]) := V_{0N} + \sum_j V_{1N}(x_j) + \dots + \sum_{j_1 < \dots < j_N} V_{NN}(x_{j_1}, \dots, x_{j_N}) \nonumber \\
    \text{satisfies} \qquad 
    |f(\bm x) - f_N(\bm x)| \lesssim e^{-\eta N}\label{body-order-approx}
\end{gather}
    for all $\bm x \in \mathrm{MS}(\Omega)$.
    
    {\it (ii)} Moreover, we suppose that each $V_{nN}$ has a sparse polynomial approximation, $V_{nND}$ where $D$ denotes the total degree, satisfying 
    \begin{equation} \label{eq:VnND-estimate}
            \big|V_{nN}(x_{1}, \dots, x_{n})- V_{nND}(x_{1}, \dots, x_{n})\big| \lesssim c^n e^{-\alpha_n D}
            \qquad \forall x_1,\dots,x_n \in \Omega
    \end{equation} 
    for some $c, \alpha_n > 0$ and $n = 1, \dots, N$. Here, $V_{0N}$ is simply a constant term and requires no approximation.
\end{assumption}

Part (i) of Assumption~\ref{as:body-order} is the main result of \cite{ThomasChenOrtner2021:body-order}, while part (ii) essentially encodes the assumption that the $V_{nN}$ are analytic, with the region of analyticity encoded in $\alpha_n$ and possibly varying. Under a suitable choice of one-particle basis, one then obtains (\ref{eq:VnND-estimate}). Note in particular, that in this context the dimensionality of the target functions $V_{nN}$ is an approximation parameter, which makes it particularly natural to consider the different regimes how $D$ and $N$ are related in the foregoing sections.

Throughout the following discussion suppose that Assumption~\ref{as:body-order} is satisfied. 
Then, for a tuple ${\bm D} = (D_1,\dots, D_N)$ 
specifying the total degrees $D_n$ used to approximate the components $V_{nN}$, the cluster expansion approximation to $f$ is given by 
\begin{equation} \label{eq:defn_cluster_expansion}
    f_{N{\bm D}}(\bm x) := V_{0N} + \sum_{n=1}^N \sum_{j_1 < \dots < j_n} V_{nND_n}(x_{j_1},\dots,x_{j_n}),
\end{equation}
with $V_{nND_n}$ of the form 
    $V_{nND_n}(x_{j_1},\dots,x_{j_n})
    =  \sum_{v_1, \dots, v_n} c^{nND_n}_{\bm v} \prod_{t = 1}^n \phi_{v_t}(x_{j_t})$, for some coefficients $c^{nND_n}_{\bm v}$ and one-body basis functions $(\phi_{v})_{v\in \N}$. 
We immediately deduce an approximation error estimate: for all $\bm x = [x_1,\dots,x_M] \in \ms(\Omega)$, 
\begin{align}
    \left| f(\bm x) -  f_{N\bm D}(\bm x)\right| 
    &\leq |f(\bm x) - f_N(\bm x)| 
    + \sum_{n=1}^N \sum_{1 \leq j_1 < \dots < j_n \leq M} \big| (V_{nN} - V_{nND})(x_{j_1},\dots,x_{j_n})\big|\nonumber\\
    \label{eq:error_f_fND}
    &\lesssim e^{-\eta N} + \sum_{n=1}^N  {M \choose n} c^n e^{-\alpha_n D_n}.
\end{align}

From this expression we can now minimize the computational cost subject to the constraint that the overall error is no worse than the many-body approximation error $e^{-\eta N}$ and then obtain a resulting error vs cost estimate.

\begin{remark}
     \label{rem:cut-off}
    As we already hinted in Remark~\ref{rem:ms:context}, the function $f$ is often more naturally defined on the whole of $\ms(\mathbb R^d)$ (or even on the space of infinite multi-sets) but only ``weakly'' depends on points far away. That is, on defining $\bm x_R := [ x \in \bm x \colon |x| \leq R ]$ (i.e.~we remove from $\bm x$ only points outside $B_R$) we have
    \[
        \big| f(\bm x) - f(\bm x_R) \big| \lesssim e^{-\gamma R},
    \]
    that is, the domain $\Omega$ itself becomes an approximation parameter as well. Such exponential ``locality'' results arise for example in modelling of interatomic interaction laws~\cite{ThomasChenOrtner2021:body-order,Chen2016-ig}. For the sake of simplicity we will not incorporate this feature into our analysis, except for a natural assumption on how $M$ and $N$ are related: 
    
    If we approximate the restriction of $f$ to $\ms(B_R)$ then we obtain
    \begin{align}
        |f(\bm x) - f_{N}(\bm x_R)| 
        \lesssim e^{-\gamma R} + e^{-\eta N}
    \end{align}
       Balancing the error, we choose $\gamma R = \eta N$. In many physical situations we can assume that particles do not cluster and this leads to the bound $M \lesssim R^d$. More generally, we will therefore assume below that $M$ is bounded by a polynomial in $N$, which will make our analysis a little more concrete.
\end{remark}

\begin{remark}[Connection to Deep sets]
    \label{rem:deepsets}
    Deep set architectures are based on the idea that set-functions $f \colon \bigcup_{m=0}^M \big(\mathbb R^d\big)^m \to \mathbb R$ which are continuous when restricted to any fixed number of inputs are symmetric if and only if there exist 
    continuous functions $\phi \colon \mathbb R^d \to \mathbb R^Z$ and $\rho \colon \mathbb R^Z \to \mathbb R$ such that  
    \begin{align}\label{eq:deepsets}
        f(\bm x) = \rho\Big( \textstyle\sum_{x\in \bm x} \phi(x) \Big).  
    \end{align}
    For $d=1$, it has been shown that $Z = M$ is sufficient \cite{Zaheer2017:deepsets}, and there exist functions where $Z = M$ is also necessary \cite{Wagstaff2019}. Motivated by this characterisation, one obtains a deep set approximation to $f$ by choosing a latent space dimension $Z$ and learning the mappings $\phi$ and $\rho$, e.g. parametrised using multi-layer perceptrons.
    
    The cluster expansion approximation $f_{N\bm D}$ (\ref{eq:defn_cluster_expansion}) can therefore be thought of as a special case of (\ref{eq:deepsets}) with both $\phi$ and $\rho$ polynomials. However, the conceptual key difference in our setting is that, on the one hand, $Z$ is significantly larger than $d$ and should be taken as an approximation parameter, while on the other hand, the embedding $\phi$ is known {\it a priori} and need not be learned. Despite these philosophical differences, one can see the approximation results of this section as approximation rates for deep set approximations.
\end{remark}

\subsection{Computational cost of the cluster expansion}
The expression in \eqref{eq:defn_cluster_expansion} suggests that the evaluation cost scales as ${M \choose N}$ (notwithstanding the cost of evaluating the $V_{nN}$ components), but in fact a similar transformation as in \S~\ref{sec:1d} allows us to reduce this to a cost that is linear in $M$; see also \cite{Drautz2019-rb,Bachmayr2019-ec}.

We consider a single term, 
\begin{align*}
    \sum_{j_1 < \dots < j_n} V_{nND_n}(x_{j_1},\dots,x_{j_n}) 
    &= \frac{1}{n!} 
    \sum_{j_1 \neq \dots \neq j_n} V_{nND_n}(x_{j_1},\dots,x_{j_n}) \\ 
    &= 
    \sum_{j_1, \dots, j_n} V_{nND_n}(x_{j_1},\dots,x_{j_n}) 
    + W_{n-1}, \\     
\end{align*}
where $W_{n-1}$ contains the artificial self-interactions introduced when converting from $\sum_{j_1 \neq \dots \neq j_n}$ to $\sum_{j_1,  \dots, j_n}$. We will return to this term momentarily. 
Now, inserting the expansion of $V_{nND_n}$, writing $c_{\bm v}$ instead of $c^{nND_n}_{\bm v}$, and interchanging summation order, we obtain 
\begin{align*}
    \sum_{j_1, \dots, j_n = 1}^M V_{nND_n}(x_{j_1},\dots,x_{j_n})
    &= 
    \sum_{j_1, \dots, j_n = 1}^M 
    \sum_{v_1, \dots, v_n} c_{\bm v} \prod_{t = 1}^n \phi_{v_t}(x_{j_t}) \\ 
    &= 
    \sum_{v_1, \dots, v_n} c_{\bm v} \prod_{t = 1}^n \sum_{j = 1}^M \phi_{v_t}(x_j) \\ 
    &=
    \sum_{v_1, \dots, v_n} c_{\bm v} \prod_{t = 1}^n A_v({\bm x}),
\end{align*}
where the inner-most terms (power sum polynomials, atomic base, density projection) are now computed over the full input range $x_1, \dots x_M$ instead of only a subcluster, 
\[
    A_{v}({\bm x}) := \sum_{j = 1}^M \phi_{v}(x_{j}).
\]
The self-interaction terms $W_{n-1}$ are simply polynomials of lower correlation-order and can be absorbed into the $\mathcal{O}(n-1)$ terms, provided that 
$D_n \leq D_{n-1}$. Thus, we can equivalently write \eqref{eq:defn_cluster_expansion} as
\begin{equation} \label{eq:efficient_cluster_expansion}
    f_{N{\bm D}} = 
    \sum_{n = 0}^N \sum_{{\bm v}} \tilde{c}_{\bm v} \prod_{t = 1}^n A_{v_t}, 
\end{equation}
where the sum $\sum_{{\bm v}}$ ranges over all ordered tuples ${\bm v}$ of length $n$, with $\sum_{t = 1}^n {\rm deg}(\phi_{v_t}) \leq D_n$. 

This leads to the following result, which states that the cost of evaluating $f_{N{\bm D}}$ is the same as evaluating a single instance of each of the components $V_{nND_n}$; that is, the sum over all possible clusters does not incur an additional cost. 

\begin{proposition} \label{th:ace_cost}
    Assume that Assumption~\ref{as:PHI} is satisfied, that the one-particle basis can be evaluated with $\mathcal{O}(1)$ operations per basis function (e.g. via recursion), and that the degrees are decreasing, $D_n \leq D_{n-1}$. Then, cluster expansion $f_{N{\bm D}}$ can be evaluated with at most 
    \begin{equation} \label{eq:ace_cost}
        C \bigg( 
            M D_1^d
            + \mathcal P
        \bigg),
    \end{equation}
    arithmetic operations ($+, \times$), where $\mathcal P \coloneqq \sum_{n = 1}^N P(n,D_n,d)$ is the number of parameters used to represent the $N$ components $V_{nND_n}$ for $n = 1, \dots, N$ and $C$ is some positive constant. 
\end{proposition}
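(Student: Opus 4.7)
The plan is to evaluate \eqref{eq:efficient_cluster_expansion} in two stages, mirroring the scheme of Remark~\ref{rmk:cost}, and to account for the cost of each stage separately. In the first stage, I precompute and store all one-particle pooled values $A_v(\bm x) = \sum_{j=1}^M \phi_v(x_j)$ needed to assemble any $V_{nND_n}$, namely those with $\deg(\phi_v) \leq D_1$ (by the monotonicity $D_n \leq D_{n-1}$, indices appearing in lower correlation orders have smaller degree). Under Assumption~\ref{as:PHI}($\Phi 4$), the number of such one-body basis functions is bounded by $\sum_{i=0}^{D_1} c_d(i) \lesssim D_1^d$, and each $A_v(\bm x)$ is formed by summing $M$ precomputed values of $\phi_v$. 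Using the assumed $\mathcal{O}(1)$ recursive evaluation of $\phi_v(x_j)$, this first stage contributes a cost of order $M D_1^d$.

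In the second stage, I assemble the products $\AA_{\bm v} = \prod_{t=1}^n A_{v_t}$ for all ordered tuples $\bm v = (v_1,\dots,v_n)$ with $\sum_t \deg(\phi_{v_t}) \leq D_n$, across $n = 1,\dots,N$. I traverse these tuples in lexicographic order and, for each new $\bm v$, use the recursion $\AA_{(v_1,\dots,v_n)} = A_{v_1} \cdot \AA_{(v_2,\dots,v_n)}$, which costs $\mathcal{O}(1)$ provided the shorter tuple $(v_2,\dots,v_n)$ has already been computed and stored. Combined with one multiply-add $\tilde{c}_{\bm v} \AA_{\bm v}$ per tuple to accumulate into $f_{N\bm D}$, this stage has total cost of order $\sum_{n=1}^N P(n, D_n, d) = \mathcal{P}$. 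Adding the constant $V_{0N}$ contributes $\mathcal{O}(1)$, and summing the two stages gives the bound \eqref{eq:ace_cost}.

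The main thing to verify, and the one point where monotonicity $D_n \leq D_{n-1}$ is essential, is that the recursion actually stays within the index set: namely that whenever $\bm v = (v_1,\dots,v_n)$ lies in the active set for correlation order $n$, its $(n-1)$-truncation $(v_2,\dots,v_n)$ lies in the active set for correlation order $n-1$. This follows immediately because $\sum_{t=2}^n \deg(\phi_{v_t}) \leq \sum_{t=1}^n \deg(\phi_{v_t}) \leq D_n \leq D_{n-1}$, so the $(n-1)$-tuple is admissible and will already have been processed earlier in the traversal. This downset property is what allows the per-tuple cost to be $\mathcal{O}(1)$ rather than $\mathcal{O}(n)$, and is the only nontrivial ingredient; all remaining bookkeeping is routine. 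Absorbing implicit constants into $C$ yields the stated bound.
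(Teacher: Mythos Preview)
The proposal is correct and follows essentially the same two-stage approach as the paper's proof: precompute the pooled one-body values $A_v$ at cost $\mathcal{O}(MD_1^d)$, then build the $n$-correlations recursively from lower correlation orders at $\mathcal{O}(1)$ cost each. Your version is in fact more explicit than the paper's, spelling out the downset argument (via $D_n \leq D_{n-1}$) that justifies the $\mathcal{O}(1)$ recursion step, which the paper only asserts.
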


\begin{remark}
    The number of parameters used to represent $V_{nND_n}$ for all $n=1,\dots,N$ can be estimated using the results of the foregoing sections. In particular, this allows us to obtain error estimates in Theorems~\ref{thm:mset-alpha-const} and \ref{thm:mset} in terms of the computational cost.
\end{remark}

\begin{proof}
    Assumption~\ref{as:PHI} implies that there are $O(D_1^d)$ one-particle basis functions, where $D_1 = \max_{1\leq n \leq N} D_n$. Precomputing all density projections $A_v$ therefore requires  $\mathcal{O}(M D_1^d)$ 
    operations. The $n$-correlations $\prod_{t = 1}^n A_{v_t}$ can be evaluated recursively with increasing correlation-order $n$, with $\mathcal{O}(1)$ cost. Hence, the total cost of evaluating \eqref{eq:defn_cluster_expansion} will be as stated in \eqref{eq:ace_cost}.
\end{proof}

\subsection{Error vs Cost estimates: special case}
Having established the remarkably low computational cost of the cluster expansion in the reformulation \eqref{eq:efficient_cluster_expansion}, we can now return to the derivation of error versus cost estimates from~\eqref{eq:error_f_fND}. 
In order to illustrate our main results, we first consider the simplest case and suppose $\alpha_n = \alpha$ is constant.
Motivated by Remark~\ref{rem:cut-off}, we assume $N \ll M \leq N^p$ for some $p > 1$ and define $D_n = D := \lceil c_1 N \log M \rceil$ (independently of $n$), for some $c_1$ to be determined later. Since $D_n$ is constant, the number of parameters for the $N$-correlation contributions will dominate. Since $N \ll D \ll N^{1+1/d}$ 
this puts us into the regime of the integer partition type estimates, which yields the following result. 

\begin{theorem}\label{thm:mset-alpha-const}
    Assume that $\alpha_n = \alpha$ appearing in~\eqref{eq:VnND-estimate} is constant and $p \geq 1$. If we choose $D_n = D = c_1 N \log N$ 
    for $c_1$ sufficiently large, then 
    \begin{equation}
    \label{eq:thm15:a}
          |f(\bm x) - f_{N\bm{D}}(\bm x)| \lesssim e^{- \eta N}
        \qquad \forall \bm x = [x_1,\dots,x_M] \in \mathrm{MS}(\Omega) \text{ with } M \leq N^p. 
    \end{equation}
    In terms of the total number of free parameters, $\mathcal P = \sum_{n = 1}^N P(n,D_n,d)$, which is also directly proportional to the computational cost, the estimate reads 
    \begin{equation}
        \label{eq:thm15:b}
         |f(\bm x) - f_{N\bm D}(\bm x)| \lesssim \exp\Big( 
            - \tilde\eta \frac{ [\log \mathcal P]^{1+1 / d} }{\log \log \mathcal P} \Big),
    \end{equation}
    for all $\bm x = [x_1,\dots,x_M] \in \mathrm{MS}(\Omega)$ with $M \leq N^p$, for some $\tilde\eta>0$,
    which is still a super-alebraic rate of convergence.
\end{theorem}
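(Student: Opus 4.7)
The plan is to start from the error estimate \eqref{eq:error_f_fND} and handle the two claims separately: \eqref{eq:thm15:a} by choosing $c_1$ large enough so that the polynomial truncation error dominates none of the body-order term, and \eqref{eq:thm15:b} by inverting the bound of Theorem~\ref{th:thm_estim_inf_N} to express $N$ in terms of $\mathcal P$.

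For \eqref{eq:thm15:a}, I would observe that with $\alpha_n = \alpha$ and $D_n = D$ the right-hand side of \eqref{eq:error_f_fND} reads
\[
  e^{-\eta N} + e^{-\alpha D}\sum_{n=1}^N \binom{M}{n} c^n.
\]
Using $\binom{M}{n}\le M^n/n!$ and $M\le N^p$, the sum is bounded by $(cN^p)^N/N!\cdot N \lesssim \exp\bigl(N\log(cN^p) - N\log N + \log N\bigr) \lesssim \exp\bigl((p-1)N\log N + O(N)\bigr)$. Substituting $D = c_1 N \log N$ makes the exponent $-\alpha c_1 N\log N + (p-1)N\log N + O(N)$, which is $\le -\eta N$ for all $N\ge 2$ provided $c_1$ is chosen larger than $(p-1+\eta)/\alpha$ (with a small adjustment for the $O(N)$ term using $\log N \ge \log 2$). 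This yields \eqref{eq:thm15:a}.

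For \eqref{eq:thm15:b}, I would invoke Theorem~\ref{th:thm_estim_inf_N} to bound $\mathcal P = \sum_{n=1}^N P(n,D,d) \le ND\,e^{\beta_d D^{d/(d+1)}}$, from which
\[
  \log \mathcal P \lesssim D^{d/(d+1)} + \log(ND) \lesssim (N\log N)^{d/(d+1)}
\]
once $D = c_1 N\log N$ is substituted (note $D \ll N^{1+1/d}$ so we are in the regime where Theorem~\ref{th:thm_estim_inf_N} is the relevant estimate). Raising both sides to the power $1+1/d$ gives $(\log \mathcal P)^{1+1/d} \lesssim N\log N$. The key step is then to extract $N$: since $\log N \lesssim \log\log \mathcal P$ (which can be verified by a short bootstrap argument, plugging $N\lesssim(\log \mathcal P)^{1+1/d}$ back into the logarithm), we deduce
\[
  N \gtrsim \frac{(\log \mathcal P)^{1+1/d}}{\log\log \mathcal P}.
\]
Combining with \eqref{eq:thm15:a} yields the claimed rate with $\tilde\eta$ absorbing the implied constants.

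The main obstacle in executing this plan is the inversion step in the last paragraph: the relation between $N$ and $\mathcal P$ is implicit because $\log N$ appears on both sides, and the $\log\log \mathcal P$ in the denominator of the final bound comes precisely from handling this self-referential bound carefully. A clean argument is to first show $N\lesssim(\log\mathcal P)^{1+1/d}$ (ignoring the $\log N$ factor), then take logarithms to obtain $\log N\lesssim \log\log \mathcal P$, and finally insert this back into the inequality $N\log N\gtrsim (\log \mathcal P)^{1+1/d}$. The other technical point — verifying that the chosen $c_1$ absorbs all polynomial prefactors in $N,M,D$ uniformly for $N\ge N_0$ — is routine once one works with $\log$ of the bound.
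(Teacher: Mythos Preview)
Your overall strategy matches the paper's: bound the error via \eqref{eq:error_f_fND} with $D=c_1 N\log N$, then invert the Hardy--Ramanujan-type bound \eqref{eq:hardyramanujan_multiD} to express $N$ in terms of $\mathcal P$. Part \eqref{eq:thm15:a} is fine; the paper's version is even cruder, bounding $\binom{M}{n}c^n \le (\max\{1,c\}\,N^p)^N$ without the $n!$, so your estimate is at least as good.

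The inversion for \eqref{eq:thm15:b}, however, has a direction error. The only inequality you have is the \emph{upper} bound $\log\mathcal P \lesssim (N\log N)^{d/(d+1)}$, equivalently $N\log N \gtrsim (\log\mathcal P)^{1+1/d}$. Your bootstrap proposes to ``first show $N\lesssim(\log\mathcal P)^{1+1/d}$'' and hence $\log N\lesssim\log\log\mathcal P$; but no upper bound on $N$ follows from an upper bound on $\mathcal P$ --- that would require a \emph{lower} bound on $\mathcal P$ in terms of $N$, which you never establish. The paper avoids this entirely: writing $a=N\log N$, it uses the elementary identity
\[
   N=\Big(1+\tfrac{\log\log N}{\log N}\Big)\,\tfrac{a}{\log a}\ \ge\ \tfrac{a}{\log a},
\]
together with the monotonicity of $t\mapsto t/\log t$ and the bound $a\ge c^{-1}(\log\mathcal P)^{1+1/d}$, to obtain $N\gtrsim (\log\mathcal P)^{1+1/d}/\log\log\mathcal P$ directly, without ever needing $\log N\lesssim\log\log\mathcal P$. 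An equivalent repair of your argument is a dichotomy: if $N\le(\log\mathcal P)^{1+1/d}$ then indeed $\log N\lesssim\log\log\mathcal P$ and you finish as written; if $N>(\log\mathcal P)^{1+1/d}$ the desired lower bound on $N$ is immediate.
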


\begin{proof}
    Applying (\ref{eq:error_f_fND}) directly with $D_n = D$ for $1\leq n\leq N$, we obtain
    \begin{align*}
        |f(\bm x) - f_{N\bm{D}}(\bm x)| 
        &\lesssim e^{-\eta N} + \sum_{n=1}^N {M \choose n} c^n N^{-\alpha c_1 N}\\
        &\leq e^{-\eta N} + N  ({\max\{ 1, c \}}N^p)^N N^{-\alpha c_1 N}.
    \end{align*}
    Therefore, we obtain the desired estimate~\eqref{eq:thm15:a} by choosing $c_1>0$ sufficiently large.
    
    Using (\ref{eq:hardyramanujan_multiD}), there exists $c_2,c_3 > 0$ such that
    \[
        \mathcal P = \sum_{n=1}^N P(n,D,d) \leq c_1 N^2 \log N e^{c_2 (c_1 N\log N)^{\frac{d}{d+1}}}
        \leq e^{c_3 (N \log N)^{\frac{d}{d+1}}}.
    \]
    That is, $ c_3^{\frac{d+1}{d}} N\log N \geq [\log \mathcal P]^{1 + \frac{1}{d}}$. Now, if $a = N \log N$, then $N = \frac{\log a}{\log N}\frac{a}{\log a} = \big( 1 + \frac{\log\log N}{\log N}\big) \frac{a}{\log a}$. Therefore, since $t \mapsto \frac{\log\log t}{\log t}$ is bounded for $t > 1 + \delta$ (for all $\delta > 0$) and $t \mapsto \frac{t}{\log t}$ is increasing for $t > e$, there exists $c_4 > 0$ such that 
    $
        N \geq c_4\frac{[\log \mathcal P]^{1 + \frac{1}{d}}}{\log \log \mathcal P}
    $ which proves~\eqref{eq:thm15:b}.
\end{proof}

\subsection{Error vs Cost estimates: General Case}

To make this analysis concrete we will assume that in~\eqref{eq:VnND-estimate}
\begin{equation}
    \alpha_n = \alpha_1 n^\beta, \qquad \text{for some } \beta \in (0,1).
\end{equation}
For the sake of generality, we consider this more general class, which better highlights the importance of balancing degree $D_n$ and dimensionality $n$, and also motivates us to revisit and try to sharpen the analysis of Appendix~\ref{app:body-order-assumption} and \cite{ThomasChenOrtner2021:body-order} in the future. In light of Proposition~\ref{th:ace_cost} we will formulate the result in terms of the number of parameters rather than in terms of computational cost. 

\begin{theorem}\label{thm:mset}
    Assume that $\alpha_n = \alpha_1 n^\beta$ for some $\beta \in (0,1)$ and $p \geq 1$. If we choose 
    \begin{equation}
    \label{eq:thm16:0}
        D_n = c_1\begin{cases}
        n^{-\beta}N &\text{if } n \leq (\log N)^{-\frac{1}{\beta}} N \\
        N^{1-\beta} \log N &\text{otherwise,}         
    \end{cases}
    \end{equation}
    for $c_1$ sufficiently large, then 
    \begin{equation}
        \label{eq:thm16:a}
        \left| f(\bm x) -  f_{N\bm D}(\bm x)\right| \lesssim  e^{-\eta N}
        \qquad \forall \bm x = [x_1,\dots,x_M] \in \mathrm{MS}(\Omega) \text{ with } M \leq N^p.
    \end{equation}
    In terms of the total number of free parameters, $\mathcal P = \sum_{n=1}^N P(n,D_n,d)$, we have, for sufficiently large $N$, 
    \begin{equation}
        \label{eq:thm16:b}
                \left| f(\bm x) -  f_{N\bm D}(\bm x)\right| \lesssim  \exp\left(-\tilde\eta [\log \mathcal P]^{1 + \beta + \frac{1}{d}}\right)
    \end{equation}
    for all $\bm x = [x_1,\dots,x_M] \in \mathrm{MS}(\Omega)$ with $M \leq N^p$, for some $\tilde\eta>0$. 
\end{theorem}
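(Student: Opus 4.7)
The plan is to first prove~\eqref{eq:thm16:a} from the generic error bound~\eqref{eq:error_f_fND}, then upgrade it to~\eqref{eq:thm16:b} by bounding $\log \mathcal{P}$ from above in terms of $N$ and inverting. The body-order term $e^{-\eta N}$ in~\eqref{eq:error_f_fND} already matches the target, so the content of~\eqref{eq:thm16:a} is that the discretisation sum $\sum_{n=1}^N \binom{M}{n} c^n e^{-\alpha_n D_n}$ can be made $\lesssim e^{-\eta N}$ for $c_1$ large.

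To prove~\eqref{eq:thm16:a}, I would split the sum at the transition $n^\star := (\log N)^{-1/\beta} N$ between the two cases of $D_n$. On the first regime ($n \le n^\star$), $\alpha_n D_n = \alpha_1 c_1 N$ is constant in $n$; the combinatorial factor $\binom{M}{n} c^n$ contributes at most $e^{n(\log c + p\log N)}$, but since $\beta<1$ gives $1-1/\beta<0$, one has $n \log N \le (\log N)^{1-1/\beta} N = o(N)$ uniformly, so each term is $\le e^{-\alpha_1 c_1 N(1+o(1))}$ and the partial sum is $\lesssim e^{-\eta N}$ for $c_1$ large. On the second regime ($n > n^\star$), writing $n = \gamma N$ gives $\alpha_n D_n = \alpha_1 c_1 \gamma^\beta N \log N$, and exploiting $\gamma^{1-\beta}\le 1$ together with $\gamma^\beta \ge (\log N)^{-1}$ similarly yields a per-term bound of $e^{-\alpha_1 c_1 N/2}$ for $c_1$ large, completing~\eqref{eq:thm16:a}.

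The central step is bounding $\log \mathcal{P}$; the target is $\log \mathcal{P} \lesssim N^{d/(d+1+d\beta)}$, whose inversion gives the exponent $1+1/d+\beta$. A single bound on $P(n, D_n, d)$ is too weak: Theorem~\ref{th:thm_estim_inf_N} applied at $n=1$ already gives $\log P(1,D_1,d) \lesssim N^{d/(d+1)}$, which only yields the weaker exponent $1+1/d$. I would therefore combine Theorem~\ref{th:thm_estim_inf_N} with the refined bound~\eqref{thm:hardyramanujan_multiD}, selecting whichever is tighter. The critical scale is $n_\star := N^{d/(d+1+d\beta)}$, namely the threshold at which $D_n = c_1 n^{-\beta} N$ matches $n^{(d+1)/d}$ up to constants. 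For $n \le n_\star$, plugging $D_n$ into~\eqref{thm:hardyramanujan_multiD} together with Stirling yields
\[
    \log P(n,D_n,d) \le dn \log N - (d+1+d\beta) n \log n + O(n) + O(\log N),
\]
which, viewed as a function of $n$, is maximised around $n \sim n_\star$ with value $O(N^{d/(d+1+d\beta)})$. For $n_\star \le n \le N$, covering both the tail of the first regime and the entire second, Theorem~\ref{th:thm_estim_inf_N} gives $\log P(n, D_n, d) \lesssim D_n^{d/(d+1)} + O(\log N)$, and substituting either definition of $D_n$ and simplifying shows $D_n^{d/(d+1)} \le C N^{d/(d+1+d\beta)}$. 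Summing and using $\mathcal P \le N \max_n P(n, D_n, d)$ produces the desired bound, so $N \gtrsim [\log \mathcal{P}]^{(d+1+d\beta)/d} = [\log \mathcal{P}]^{1+1/d+\beta}$ by inversion, and substituting into~\eqref{eq:thm16:a} yields~\eqref{eq:thm16:b}.

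The main obstacle is this interplay between the two $P$-bounds. The $\beta$-improvement over Theorem~\ref{thm:mset-alpha-const} comes from invoking the sharper estimate~\eqref{thm:hardyramanujan_multiD} at small $n$, where the intrinsic dimensionality is much less than $N$ but $D_n$ is still of order $N$. The degree schedule $D_n = c_1 n^{-\beta} N$ is precisely the scaling that balances the two bounds so that the maximum of $n \mapsto \log P(n,D_n,d)$ sits at $n_\star = N^{d/(d+1+d\beta)}$, where they coincide; any other natural choice of $D_n$ would shift the maximum and degrade the exponent.
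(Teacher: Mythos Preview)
Your proposal is correct and follows essentially the same route as the paper: Part~1 verifies~\eqref{eq:thm16:a} by splitting the sum at the transition of the degree schedule and checking that the combinatorial prefactor is subdominant in each regime, and Part~2 combines the refined bound~\eqref{thm:hardyramanujan_multiD} for small $n$ with the Hardy--Ramanujan-type bound~\eqref{eq:hardyramanujan_multiD} for large $n$, splitting at the scale $n_\star \sim N^{d/(d+1+d\beta)} = N^{1/(1+\beta+1/d)}$ where the two coincide. The paper carries out the same computation slightly more explicitly (three sub-ranges of $n$ and an explicit maximisation of $t\mapsto (c/t^\alpha)^t$), whereas you work throughout at the level of $\log P_n$, but the ideas and the critical threshold are identical.
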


\begin{proof}
\textit{Part 1: Error estimates~\eqref{eq:thm16:a}.} We balance the error by choosing $\bm D = (D_n)$ such that $\displaystyle {M \choose n} c^n e^{-\alpha_1 n^\beta D_n} \leq \frac{1}{N} e^{-\eta N}$ for $n=1,\dots,N$ where $c>0$ is the constant from Assumption~\ref{as:body-order}. 
To do so, we first apply Stirling's formula, 
which gives for $n \geq 1$
 \begin{align*}
        {M \choose n} c^n e^{-\alpha_1 n^\beta D_n} 
        = \frac{M!}{n!(M-n)!} c^n e^{-\alpha_1 n^\beta D_n} 
        \leq \left(\frac{ceM}{n}\right)^n e^{-\alpha_1 n^\beta D_n}.
    \end{align*}
Using that $e^{-N} \le 1/N$ for $N \ge 0$, we therefore choose $D_n$ such that 
    \begin{align*}
        \left(\frac{ceM}{n}\right)^n e^{-\alpha_1 n^\beta D_n} 
        \leq e^{-(\eta + 1) N}.
    \end{align*}
That is, to obtain the required estimate, noting that $n\le N$, it is enough to  choose 
\[
\alpha_1 n^\beta D_n
    \geq n \log \frac{M}{n} + (\eta + \log c + 2) N. 
\]

Since $\log \frac{M}{n} \leq \log \frac{N^p}{n} \leq p \log N$ for all $1 \leq n \leq N$, we may instead choose $(D_n)$ such that
\[
    D_n \geq c_1 n^{-\beta} \max\left\{ n\log N, N \right\} 
\]
where $c_1 := \alpha_1^{-1}( p + \eta + \log c + 2 )$. Since $n \log N \leq N$ for all $1 \leq n \leq (\log N)^{-1} N$ and $n \log N \geq N$ for all $n \geq (\log N)^{-1} N$, and by incorporating the condition that $(D_n)$ is decreasing, we may choose 
\[
    D_n := c_1 \max_{k\geq n} \begin{cases}
        k^{-\beta} N &\text{if } 1 \leq k \leq (\log N)^{-1} N \\
        k^{1-\beta} \log N &\text{if } (\log N)^{-1} N \leq k \leq N.
    \end{cases}
\] 

To conclude, we note that, for $\beta \in (0,1)$, the function $k \mapsto k^{-\beta} N$ is decreasing and $k \mapsto k^{1-\beta} \log N$ is increasing and so there exists $1\leq n^\star \leq (\log N)^{-1}N$ for which $D_n = c_1 n^{-\beta} N$ for all $n \leq n^\star$ and $D_n = c_1 N^{1-\beta}\log N$ for all $n\geq n^\star$. Solving $(n^\star)^{-\beta} N = N^{1-\beta} \log N$ yields $n^\star = (\log N)^{-\frac{1}{\beta}} N$ as required. This concludes the proof of~\eqref{eq:thm16:a}.

\begin{remark}Moreover, by the same arguments, if $\beta \leq 0$, then we may choose $D_n = c_1 N^{1-\beta} \log N$ for all $1 \leq n \leq N$ (which, in particular, agrees with the choice in Theorem~\ref{thm:mset-alpha-const}), and if $\beta \geq 1$, we can choose
\[
    D_n = c_1\begin{cases}
        n^{-\beta}N &\text{if } n \leq (\log N)^{-1} N \\
        n^{1-\beta} \log N &\text{otherwise.}         
    \end{cases}
\]
\end{remark}

\textit{Part 2: Estimates on the number of parameters~\eqref{eq:thm16:b}.} Let $P_n \coloneqq P(n,D_n,d)$ be the number of parameters needed to construct $V_{nND_n}$. According to~\eqref{thm:hardyramanujan_multiD}, there exist $c_2,c_3 > 0$ such that
    \begin{align*}
        P_n &\leq nc_2^n D_n \frac{(D_n + c_3 n^{\frac{d+1}{d}})^{dn}}{(dn)! n!}. 
    \end{align*}
            Using Stirling's estimate, we obtain
            \[
                P_n \le 
                \frac
            {(c_2 e^{d+1})^n D_n}
            {2\pi d^{dn+\frac{1}{2}} n^{(d+1)n}} \; 
        2^{dn} \max\{ D_n, c_3 n^{\frac{d+1}{d}}\}^{dn}.
            \]
            Hence, for some $c_4 > 0$,
        \begin{align*}
        P_n &\leq e^{c_4 n} D_n 
        \max\left\{\left( \tfrac{D_n^{d}}{n^{d+1}}\right)^n, 1\right\}
        % \\
        %
        % &
        = \begin{cases}
            D_n \left(\frac{e^{c_4} D_n^d}{n^{d+1}}\right)^n
            &\text{if } 1 \leq n \leq D_n^{\frac{d}{d+1}} 
            \\
            D_n e^{c_4 n} 
            &\text{if } D_n^{\frac{d}{d+1}} \leq n \leq N
        \end{cases}
    \end{align*}
    On the other hand, by \eqref{eq:hardyramanujan_multiD}, there exists $c_5 > 0$ such that 
    \begin{equation}
        \label{eq:recall18}
        P_{n} \leq D_n \mathrm{exp}\big[ c_5 D_n^{\frac{d}{d+1}} \big],
    \end{equation}
    for all $1 \leq n \leq N$.

    \textit{Case (i): $1 \leq n \leq (\log N)^{-\frac{1}{\beta}} N$.} In this case, we have 
    following~\eqref{eq:thm16:0} $D_n = c_1 n^{-\beta} N$
    and so $n \leq D_n^{\frac{d}{d+1}}$ if and only if $n \leq (c_1 N)^{\frac{1}{1 + \beta + 1/d}}$. Therefore, assuming $N$ is sufficiently large such that
    \[
        (c_1N)^{\frac{1}{1+\beta+1/d}}
        \leq (\log N)^{-\frac{1}{\beta}}N,
    \]
     we obtain
    \begin{align}\label{eq:Pn-1}
        P_n \leq D_n \left(\frac{e^{c_4} D_n^d}{n^{d+1}}\right)^n \leq 
        c_1 {n^{-\beta}} N \left(\frac{e^{c_4} (c_1N)^d}{n^{1 + \beta d+ d}}\right)^n
        \qquad 
        \textrm{for } 1 \leq n \leq (c_1N)^{\frac{1}{1+\beta+1/d}}.
    \end{align}
    
    Moreover, in the case $(c_1N)^{\frac{1}{1+\beta+1/d}} \leq n \leq (\log N)^{-\frac{1}{\beta}} N$, we have
    \[
        c_5 D_n^{\frac{d}{d+1}} = 
        c_5 (c_1 n^{-\beta} N )^{\frac{d}{d + 1}} 
        \leq c_5 \left[ (c_1N)^{1-\frac{\beta}{1+\beta+1/d}}
        \right]^{\frac{d}{d + 1}}
        = c_5 (c_1N)^{\frac{1}{1 + \beta + 1/d}}
    \]
    and thus using~\eqref{eq:recall18} 
    \begin{align}\label{eq:Pn-2}
        P_n \leq D_n e^{c_5 D_n^{\frac{d}{d+1}}}
        \leq c_1 n^{-\beta} N e^{c_5 (c_1N)^{\frac{1}{1+\beta + 1/d}}}
        \qquad \textrm{for } (c_1N)^{\frac{1}{1+\beta+1/d}} \leq n \leq {(\log N)^{-\frac{1}{\beta}}N}.
    \end{align}
    
    \textit{Case (ii): $(\log N)^{-\frac{1}{\beta}} N\leq n \leq N$.} In this case, $D_n = c_1 N^{1-\beta} \log N$ and so using~\eqref{eq:recall18}
    \begin{align}\label{eq:Pn-3}
        P_n \leq D_n e^{c_5 D_n^{\frac{d}{d+1}}}
        \leq  c_1 N^{1-\beta} \log N e^{c_5 (c_1 N^{1-\beta} \log N)^{\frac{d}{d + 1}}}.
    \end{align}
    
    \medskip
    In order to bound the total number of parameters needed to construct $V_{nND_n}$ for $1 \leq n \leq N$, we consider sum of (\ref{eq:Pn-1}), (\ref{eq:Pn-2}), and (\ref{eq:Pn-3}) over their respective ranges of $n$:
    
    We start with (\ref{eq:Pn-1}). Since the function $g(t) := \big(\frac{c}{t^\alpha}\big)^t$ (for $c,\alpha>1$) has a global maximum at $t^\star = \frac{1}{e} c^{1/\alpha}$ with $g(t^\star) = \mathrm{exp}\big[\frac{\alpha}{e} c^\frac{1}{\alpha}\big]$, we have 
    taking $c = e^{c_4} (c_1N)^d$ and $\alpha = 1 + \beta d+ d$
    \begin{align}
        \sum_{n=1}^{\lfloor (c_1 N)^{\frac{1}{1 + \beta + 1/d}} \rfloor} P_n
        &\leq 
        \sum_{n =1}^{\lfloor (c_1 N)^{\frac{1}{1 + \beta + 1/d}} \rfloor}  c_1 n^{-\beta} N \left(\frac{e^{c_4} (c_1N)^d}{n^{1 + \beta d+ d}}\right)^n  \nonumber \\
        &\leq 
        (c_1 N)^{1 + \frac{1}{1 + \beta + 1/d}} \cdot 
        \mathrm{exp}\left[ \frac{1 + \beta d  + d}{e} [e^{c_4}(c_1 N)^d]^{\frac{1}{1+\beta d + d}} \right] \nonumber\\
        &\leq (c_1 N)^{\frac{2+\beta +1/d}{1 + \beta + 1/d}} e^{c_6 N^{\frac{1}{1 + \beta + 1/d}}}\label{eq:sumPn-1}
    \end{align}
    for some $c_6 > 0$. 
    Here, we have used the bound $n^{-\beta} \leq 1$.
    
    Next, we consider the sum of (\ref{eq:Pn-2}). On the relevant range of $n$, we have $n^{-\beta} \leq (c_1N)^{-\frac{\beta}{1+\beta+1/d}}$, and thus
     \begin{align}
        \sum_{n =\lceil (c_1 N)^{\frac{1}{1 + \beta + 1/d}} \rceil}^{(\log N)^{-\frac{1}{\beta}}N} P_n
        &\leq 
        \sum_{n =\lceil (c_1 N)^{\frac{1}{1 + \beta + 1/d}} \rceil}^{(\log N)^{-\frac{1}{\beta}}N} c_1 n^{-\beta} N e^{c_5 (c_1N)^{\frac{1}{1+\beta + 1/d}}} \nonumber \\
        &\leq 
        N (c_1N)^{1 - \frac{\beta}{1+\beta+1/d} } e^{c_5 (c_1N)^{\frac{1}{1+\beta + 1/d}}} \nonumber\\
        &\lesssim N^{\frac{2+ \beta +2/d}{1+\beta+1/d} } e^{c_7 N^{\frac{1}{1 + \beta + 1/d}}}\label{eq:sumPn-3}
    \end{align}
    for some $c_7 > 0$.
    
    Finally, summing (\ref{eq:Pn-3}) gives
    \begin{align}
        \sum_{n = (\log N)^{-\frac{1}{\beta}}N}^N P_n 
        \leq c_1 N^{2-\beta} \log N e^{c_5 (c_1 N^{1-\beta} \log N)^{\frac{d}{d + 1}}}.\label{eq:sumPn-2}
    \end{align}
    
    The dominant contribution (for large $N$) is either (\ref{eq:sumPn-1}) or (\ref{eq:sumPn-3}) since $(1-\beta)\frac{d}{d+1} < \frac{1}{1 + \beta + 1/d}$ for all $\beta \in (0,1)$ and $d \geq 1$. In particular, there exists $c_8 > 0$ such that the total number of parameters needed to construct $V_{nND_n}$ for $n=1,\dots,N$ satisfies $\mathcal P \leq e^{c_8 N^{\frac{1}{1+\beta + 1/d}}}$ and thus
    \[
        N \geq \Big( \frac{1}{c_8} \log \mathcal P \Big)^{1 + \beta + \frac{1}{d}}
    \]
    as required to prove~\eqref{eq:thm16:b}.
\end{proof}

\section{Conclusion}
We have established rigorous approximation rates for sparse and symmetric polynomial approximations of symmetric functions in high dimension. What is particularly intriguing about our results is that they highlight clearly how symmetry reduces the curse of dimensionality, sometimes significantly so. Our results also build a foundation for analysing the approximation of functions defined on a configuration space (multi-sets), which we outlined as well. 

Further open challenges include the incorporation of more complex symmetries, e.g.,  coupling permutation with Lie group symmetries such as $O(d)$ for classical particles or $O(1,d)$ for relativistic particles, as well as the identification of practical constructive algorithms to construct approximants in our context that achieve (close to) the optimal rates.

\section*{Acknowledgements}

M.B.\ acknowledges funding by Deutsche Forschungsgemeinschaft (DFG, German Research Foundation) -- Projektnummer 233630050 -- TRR 146; G. D.'s work was supported by the French ``Investissements d'Avenir'' program, project ISITE-BFC (contract ANR-15-IDEX-0003); C.O.\ is supported by the Natural Sciences and Engineering Research Council of Canada (NSERC) [IDGR019381]; J.T.\ is supported by the Engineering and Physical Sciences Research Council (EPSRC) Grant EP/W522594/1.

\appendix

\renewcommand{\above}[2]{\genfrac{}{}{0pt}{}{#1}{#2}}

\section{Justification of the Many-Body Expansion}
\label{app:body-order-assumption}

The body-ordered approximation asserted in Assumption~\ref{as:body-order} is motivated by recent results presented in \cite{ThomasChenOrtner2021:body-order} for a wide class of tight binding models. In this section, we give a brief justification of these results.

\subsection{Boundedness: $|V_{nN}(x_{j_1},\dots,x_{j_n})| \lesssim 2^n$}

Here, we will briefly note that naive body-ordered approximations satisfy the required bound. For 
$f\colon \ms(\Omega) \to \mathbb R$,
we may define the following \textit{vacuum cluster expansion}: 
\begin{align}
    f_N(\bm x) &:= V_0 + \sum_{n=1}^N \sum_{j_1< \dots < j_n} V_n(x_{j_1},\dots,x_{j_n}) \\
    \textrm{where} \qquad 
    V_n(x_{j_1},\dots,x_{j_n}) &:= \sum_{K \subseteq \{j_1,\dots,j_n\}} (-1)^{n-|K|} f\big([x_{k} \colon k \in K]\big). \label{eq:Vn}
\end{align}
That is, the $n$-body potential is obtained by considering the $n$ variables of interest and removing the terms that correspond to strictly smaller body-order. The alternating summation comes from the inclusion-exclusion principle. By construction, the expansion is exact for systems of size at most $N$ (i.e.~$f([x_1,\dots,x_M]) = f_N([x_1,\dots,x_M])$ for all $M\leq N$). Moreover, assuming $f$ is uniformly bounded, we obtain the required estimate $|V_{nN}(x_{j_1},\dots,x_{j_n})| \lesssim 2^n$.

While this classical vacuum cluster expansion is perhaps the most natural, it is not guaranteed to converge rapidly if at all. Instead, we will review recent results for the site energies for a particular class of simple electronic structure models.

\subsection{Convergence: A tight-binding example}
For an atomic configuration $\bm x = [x_1,\dots,x_M] \in \ms(\Omega)$, a (two-centre) tight binding \textit{Hamiltonian} $\mathcal{H}(\bm x)$, describing the interaction between the atoms in the system, is given by the following matrix
\[
    \mathcal{H}(\bm x)_{ij} := h( x_i - x_j )
\]
for some smooth function $h \colon \mathbb R^d \to \mathbb R$ satisfying $|h(\xi)| + |\nabla h(\xi)| \leq h_0 e^{-\gamma_0 |\xi|}$. 

For functions $o\colon \mathbb R \to \mathbb R$, the corresponding observable, $O(\bm x)$, is written as the following function of the Hamiltonian:
\begin{align}\label{eq:E}
    O(\bm x) 
    := \mathrm{Tr}\, o\big(\mathcal{H}(\bm x)\big) 
    = \sum_{i=1}^M o\big(\mathcal{H}(\bm x)\big)_{ii}.
\end{align}
We will think of $O(\bm x)$ as the total energy of the system and the right-hand side of (\ref{eq:E}) as a site energy decomposition. We will justify Assumption~\ref{as:body-order} for the site energy $f(\bm x) := o\big(\mathcal{H}(\bm x)\big)_{11}$. By approximating $o$ with a polynomial $o_N(x) \coloneqq \sum_{k=0}^{N} c_k x^k$ of degree $N$, we define the body-ordered approximation by
\begin{align}
    f_N(\bm x) 
    &:= o_N\big( \mathcal{H}(\bm x) \big)_{11} 
    = \sum_{k=0}^{N} c_k \big[\mathcal{H}(\bm x)^k\big]_{11} \nonumber\\
    &= \sum_{k=0}^{N} c_k \sum_{i_1,\dots,i_{k-1}} h( x_1 - x_{i_1} ) h( x_{i_1} - x_{i_2} ) \cdots  h( x_{i_{k-2}} - x_{i_{k-1}} )h( x_{i_{k-1}} - x_{1} ),
\end{align}
a function of body-order at most $N$. Convergence results for this approximation scheme follow from the estimate:
\[
    |f(\bm x) - f_N(\bm x)| 
    \leq \big|\big[ o\big(\mathcal{H}(\bm x) \big) - o_N\big( \mathcal{H}(\bm x) \big) \big]_{11} \big|
    \leq \sup_{ z \in \sigma(\mathcal H(\bm x))} | o(z) - o_N(z) |.
\]
Now, if $o$ is an analytic function in a neighbourhood of the spectrum $\sigma\big(\mathcal{H}(\bm x)\big)$, we are able to construct approximations $o_N$ that give an exponential rate of convergence with rate depending on the region of analyticity of $o$ \cite{ThomasChenOrtner2021:body-order}.

Written more explicitly, the body-ordered approximation has a similar expression to that of (\ref{eq:Vn}):
\begin{align}
    f_N(\bm x) &=  V_{0N} + \sum_{n=1}^{N-1} 
    \sum_{ j_1< \dots < j_n } 
    V_{nN}(x_{j_1},\dots,x_{j_n})
    \qquad \text{where} \nonumber\\
    V_{nN}(x_{j_1},\dots,x_{j_{n}}) &:= \sum_{k=0}^{N} c_k 
    \sum_{\above{i_0,i_1\dots,i_{k} \colon i_0=i_k = 1, }{ 
    \{i_0,\dots,i_k\} = \{1,j_1,\dots,j_n\}}} 
   \prod_{l=0}^{k-1} h( x_{j_l} - x_{j_{l+1}} ) \nonumber\\
   &= \sum_{K\subseteq \{j_1,\dots,j_n\}} (-1)^{n-|K|} o_N\big(\mathcal{H}(\bm x)|_{1,K}\big)_{11}. \label{eq:VnN}
\end{align}
Here, $\mathcal{H}(\bm x)|_{1,K}$ is the restriction of $\mathcal{H}(\bm x)$ to $\{1\}\cup K$ defined as $[\mathcal{H}(\bm x)|_{1,K}]_{ij} = \mathcal{H}(\bm x)_{ij}$ if $i,j \in \{1\}\cup K$ and $[\mathcal{H}(\bm x)|_{1,K}]_{ij} = 0$ otherwise. Therefore, in this expression, $V_{nN}(x_{j_1},\dots, x_{j_n})$ is an $(n+1)$-body potential of the central atom $x_1$ and $x_{j_1},\dots, x_{j_n}$ (some authors therefore call $V_{nN}$ an $n$-correlation potential). The final line in (\ref{eq:VnN}) follows from an inclusion-exclusion principle \cite{ThomasChenOrtner2021:body-order}. In particular, the boundedness of the $V_{nN}$ follows from the boundedness of $o_N$ \cite{ThomasChenOrtner2021:body-order}, as in (\ref{eq:Vn}). Moreover, $V_{nN}$ inherits the analyticity properties of the Hamiltonian. 

We have therefore seen that the site energies in the tight binding framework satisfy both the conditions in Assumption~\ref{as:body-order}: \textit{(i)} convergence of a body-ordered approximation, with an exponential rate, and \textit{(ii)} the $V_{nN}$ are analytic (with region of analyticity depending on the regularity of the Hamiltonian).

\bibliographystyle{siam}
\bibliography{biblio}

\begin{thebibliography}{10}

\bibitem{Beged-dov1972-mp}
{\sc A.~G. Beged-dov}, {\em Lower and upper bounds for the number of lattice
  points in a simplex}, SIAM J. Appl. Math., 22 (1972), pp.~106--108.

\bibitem{Chen2016-ig}
{\sc H.~Chen and C.~Ortner}, {\em {QM/MM} methods for crystalline defects. part
  1: Locality of the tight binding model}, Multiscale Model. Simul., 14 (2016),
  pp.~232--264.

\bibitem{Cohen2015-ol}
{\sc A.~Cohen and R.~DeVore}, {\em Approximation of high-dimensional parametric
  {PDEs} *}, Acta Numer., 24 (2015), pp.~1--159.

\bibitem{Derksen2015-km}
{\sc H.~Derksen and G.~Kemper}, {\em Computational Invariant Theory}, Springer,
  Dec. 2015.

\bibitem{Drautz2019-rb}
{\sc R.~Drautz}, {\em Atomic cluster expansion for accurate and transferable
  interatomic potentials}, Phys. Rev. B Condens. Matter, 99 (2019), p.~014104.

\bibitem{acewave2022}
{\sc R.~Drautz and C.~Ortner}.
\newblock in preparation.

\bibitem{Bachmayr2019-ec}
{\sc G.~Dusson, M.~Bachmayr, G.~Csanyi, R.~Drautz, S.~Etter, C.~van~der Oord,
  and C.~Ortner}, {\em Atomic cluster expansion: Completeness, efficiency and
  stability}, J. Comp. Phys., 454 (2022).

\bibitem{Erdos1942-qa}
{\sc P.~Erdos}, {\em On an elementary proof of some asymptotic formulas in the
  theory of partitions}, Ann. Math., 43 (1942), pp.~437--450.

\bibitem{GriebelOettershagen:16}
{\sc M.~Griebel and J.~Oettershagen}, {\em On tensor product approximation of
  analytic functions}, Journal of Approximation Theory, 207 (2016),
  pp.~348--379.

\bibitem{Han2019-ae}
{\sc J.~Han, Y.~Li, L.~Lin, J.~Lu, J.~Zhang, and L.~Zhang}, {\em Universal
  approximation of symmetric and anti-symmetric functions}, arXiv e-prints,
  1912.01765 (2019).

\bibitem{Hardy1918-gg}
{\sc G.~H. Hardy and S.~Ramanujan}, {\em Asymptotic formula{\ae} in combinatory
  analysis}, Proceedings of the London Mathematical Society,  (1918).

\bibitem{Kall2021}
{\sc I.~Kaliuzhnyi and C.~Ortner}, {\em Optimal evaluation of symmetry-adapted
  $n$-correlations via recursive contraction of sparse symmetric tensors}.
\newblock arXiv:2202.04140.

\bibitem{Mason1980-li}
{\sc J.~C. Mason}, {\em Near-best multivariate approximation by fourier series,
  chebyshev series and chebyshev interpolation}, J. Approx. Theory, 28 (1980),
  pp.~349--358.

\bibitem{NovakWozniakowski08}
{\sc E.~Novak and H.~Wo\'{z}niakowski}, {\em Tractability of multivariate
  problems. {V}ol. 1: {L}inear information}, vol.~6 of EMS Tracts in
  Mathematics, European Mathematical Society (EMS), Z\"{u}rich, 2008.

\bibitem{Qi2017}
{\sc C.~R. Qi, H.~Su, K.~Mo, and L.~J. Guibas}, {\em Pointnet: Deep learning on
  point sets for 3d classification and segmentation}, in Proceedings of the
  IEEE Conference on Computer Vision and Pattern Recognition (CVPR), July 2017.

\bibitem{2011-ip}
{\sc {R B J} and A.~Slomson}, {\em How to Count: An Introduction to
  Combinatorics, Second Edition}, CRC Press, July 2011.

\bibitem{Ramirez_Alfonsin2005-et}
{\sc J.~L. Ram{\'\i}rez~Alfons{\'\i}n}, {\em The Diophantine Frobenius
  Problem}, OUP Oxford, Dec. 2005.

\bibitem{Ryan2002}
{\sc R.~A. Ryan}, {\em Introduction to Tensor Products of Banach Spaces},
  Springer Monographs in Mathematics, Springer London, 2002.

\bibitem{sagan2001symmetric}
{\sc B.~Sagan}, {\em The symmetric group: representations, combinatorial
  algorithms, and symmetric functions}, vol.~203, Springer Science \& Business
  Media, 2001.

\bibitem{ThomasChenOrtner2021:body-order}
{\sc J.~Thomas, H.~Chen, and C.~Ortner}, {\em Rigorous body-order
  approximations of an electronic structure potential energy landscape}, arXiv
  e-prints, 2106.12572 (2021).

\bibitem{Trefethen2017-rc}
{\sc L.~Trefethen}, {\em Multivariate polynomial approximation in the
  hypercube}, Proc. Am. Math. Soc.,  (2017).

\bibitem{Wagstaff2019}
{\sc E.~Wagstaff, F.~Fuchs, M.~Engelcke, I.~Posner, and M.~A. Osborne}, {\em On
  the limitations of representing functions on sets}, in Proceedings of the
  36th International Conference on Machine Learning, K.~Chaudhuri and
  R.~Salakhutdinov, eds., vol.~97 of Proceedings of Machine Learning Research,
  PMLR, 09--15 Jun 2019, pp.~6487--6494.

\bibitem{Weimar12}
{\sc M.~Weimar}, {\em The complexity of linear tensor product problems in
  (anti)symmetric {H}ilbert spaces}, J. Approx. Theory, 164 (2012),
  pp.~1345--1368.

\bibitem{Yutsis1965-rr}
{\sc A.~P. Yutsis and A.~A. Bandzaitis}, {\em Theory of angular momentum in
  quantum mechanics}, Vil'nyus,  (1965).

\bibitem{Zaheer2017:deepsets}
{\sc M.~Zaheer, S.~Kottur, S.~Ravanbakhsh, B.~Poczos, R.~R. Salakhutdinov, and
  A.~J. Smola}, {\em Deep sets}, in Advances in Neural Information Processing
  Systems, I.~Guyon, U.~V. Luxburg, S.~Bengio, H.~Wallach, R.~Fergus,
  S.~Vishwanathan, and R.~Garnett, eds., vol.~30, Curran Associates, Inc.,
  2017.

\end{thebibliography}

\end{document}